\documentclass[12pt,a4paper]{article}

\usepackage{amsmath,amssymb,amsthm}
\usepackage{mathtools}
\usepackage{hyperref}
\usepackage{geometry}
\usepackage{enumitem}
\usepackage{microtype}
\usepackage[dvipsnames]{xcolor}

\hypersetup{
  colorlinks=true,
  linkcolor=NavyBlue,
  citecolor=NavyBlue,
  urlcolor=NavyBlue
}

\newtheorem{theorem}{Theorem}[section]
\newtheorem{proposition}[theorem]{Proposition}
\newtheorem{lemma}[theorem]{Lemma}
\newtheorem{corollary}[theorem]{Corollary}
\theoremstyle{definition}
\newtheorem{assumption}[theorem]{Assumption}
\newtheorem{defi}[theorem]{Definition}
\newtheorem{remark}[theorem]{Remark}

\newcommand{\R}{\mathbb{R}}
\newcommand{\Om}{\Omega}
\newcommand{\eps}{\varepsilon}

\newcommand{\norm}[2]{\lVert #1 \rVert_{#2}}

\title{$W^{2,1}$ approximation of planar Sobolev homeomorphisms\\
by smooth diffeomorphisms}

\author{Luigi D'Onofrio\\[4pt]
\small Dipartimento di Scienze e Tecnologie,\\
\small Università degli Studi di Napoli ``Parthenope''\\
\small Centro Direzionale Isola C4\\[2pt]
\small \texttt{luigi.donofrio@uniparthenope.it}}

\date{}

\begin{document}

\maketitle

\begin{abstract}
The approximation of Sobolev homeomorphisms by smooth diffeomorphisms is well understood in first-order spaces $W^{1,p}$, but remains largely open in the second-order space $W^{2,1}$ due to a fundamental tension between curvature control and injectivity.

In this paper we isolate and resolve the local analytical component of this problem. We construct explicit local regularisations both across flat interfaces and near multi-cell vertices, and prove convergence in $W^{2,1}$ together with quantitative preservation of the Jacobian. We prove that any piecewise quadratic \(C^1\)-compatible planar homeomorphism on a finite conforming rectangular partition, satisfying a quantitative lower bi-Lipschitz bound and the uniform nondegeneracy condition \(\det Dg \ge \lambda>0\), can be approximated in \(W^{2,1}\) by injective \(C^1\) maps which are smooth outside arbitrarily small neighborhoods of the endpoints of the interior edges. Under an additional completion assumption for finitely many nonsmooth regions, this yields a conditional reduction of the full global smooth approximation problem to a localized smoothing problem near a finite singular set.

Thus the paper separates the analytic smoothing step from the geometric
approximation step. The results show that, once a quantitatively nondegenerate
\(C^1\)-compatible piecewise quadratic approximation is available, the remaining
analytic smoothing can be carried out in \(W^{2,1}\), up to the explicit
localized completion assumption stated in Section 6.
\end{abstract}

\tableofcontents

\section{Introduction}

The approximation of Sobolev homeomorphisms by smooth diffeomorphisms is a
central problem in geometric analysis, with deep connections to nonlinear
elasticity and the mathematical theory of deformations initiated by
Ball~\cite{Ball82,Ball01}. Admissible deformations are required to be almost
everywhere injective and orientation-preserving, properties that are notoriously
unstable under standard smoothing procedures; see
\cite{Campbell2021,IwaniecKovalevOnninen2017} for the planar case and
\cite{Campbell2026} for higher dimensions.

For first-order Sobolev spaces $W^{1,p}$, the problem is by now well understood.
Fundamental results of Iwaniec--Kovalev--Onninen~\cite{IwaniecKovalevOnninen2011}
and Hencl--Pratelli~\cite{HenclPratelli2018} show that planar Sobolev
homeomorphisms can be approximated by smooth diffeomorphisms in the $W^{1,p}$
topology. These constructions rely on delicate geometric modifications that
preserve injectivity while controlling first derivatives.

The second-order space $W^{2,1}$ presents a substantially more rigid regime.
Classical mollification destroys injectivity by introducing foldings; piecewise
constructions designed to restore injectivity generate curvature concentrations
at interfaces, causing the $W^{2,1}$ norm to blow up. This reflects a deeper
phenomenon: second-order information (curvature) interacts globally with
topological constraints such as injectivity and orientation preservation.

\medskip

At present, no general approximation theorem in $W^{2,1}$ comparable to the
$W^{1,p}$ theory is known. Campbell--Hencl~\cite{CampbellHencl2021} pioneered
the use of piecewise quadratic maps, simultaneously addressing the geometric
grid construction and the analytic regularity. 
The present paper is directly inspired by the pioneering work of 
Campbell--Hencl~\cite{CampbellHencl2021}, who first introduced piecewise 
quadratic maps as the natural framework for $W^{2,1}$ approximation and 
established the foundational results in this direction. Building on their 
approach, we isolate and develop the local analytic component of their 
programme as a self-contained theory: we prove that any piecewise quadratic \(C^1\)-compatible homeomorphism satisfying the explicit quantitative hypotheses stated below can be smoothed in \(W^{2,1}\), in the localized sense made precise in Theorem 5.5 and, under the additional
completion assumption of Section 6, by smooth injective maps. The goal is not to replace their 
construction, but to provide a flexible analytic black box that can be 
combined with future progress on the geometric approximation step --- 
the component that remains the core open problem of the theory.

The purpose of the present paper is to isolate the local analytical component of
this programme and to prove quantitative smoothing results for
\(C^1\)-compatible piecewise quadratic maps under explicit nondegeneracy
assumptions.

\medskip

We decompose the $W^{2,1}$ approximation problem into two conceptually distinct
steps:
\begin{enumerate}[label=(\roman*)]
  \item \emph{Local smoothing problem.} Given a piecewise polynomial
      homeomorphism that is $C^1$ across interfaces and has positive Jacobian,
     construct an approximation that is globally $C^1$, preserves injectivity
and $W^{2,1}$ control, and is smooth across the interfaces away from
arbitrarily small neighborhoods of the endpoints of the interior edges.
  \item \emph{Global geometric approximation problem.} Approximate a general
        $W^{2,1}$ homeomorphism by such structured piecewise maps.
\end{enumerate}
The main contribution is a quantitative solution of the analytic smoothing step
for \(C^1\)-compatible piecewise quadratic maps away from a finite set of
endpoint neighborhoods, together with a conditional completion theorem under the
additional assumption stated in Section 6.

\medskip

Piecewise affine constructions suffice in the $W^{1,p}$ setting but are too
rigid at second order: Hessian discontinuities produce singular measures that
cannot be controlled in $W^{2,1}$. Quadratic maps provide the minimal
flexibility needed to absorb second-order mismatches across interfaces while
retaining explicit algebraic structure. $C^1$ compatibility forces a precise
second-order cancellation (Lemma~\ref{lem:mismatch}) which we exploit to
construct smooth transitions with uniform second-derivative control.

\medskip
Our construction yields injective \(C^1\) approximants which are smooth outside arbitrarily small neighborhoods of the endpoints of the interior edges of the partition. We do not claim global $C^2$
regularity.
\medskip

The paper is organized as follows:
Section~\ref{sec:prelim} collects the necessary preliminary estimates.
Section~\ref{sec:flat} treats smoothing across a flat interface.
Section~\ref{sec:vertex} treats smoothing near a vertex.
Section~\ref{sec:global} proves that the map can be smoothed outside arbitrarily small neighborhoods of the endpoints of the interior edges.
Section~\ref{sec:approx} formulates the conditional approximation result.
Throughout the paper, $C^1$-compatibility of a piecewise quadratic map
means that the polynomial pieces have matching traces of both the value
and the first derivative on every common interior edge; see Definition 2.2.

\section{Preliminaries}
\label{sec:prelim}

We collect one tool used throughout.

\begin{lemma}[Scaled cut-off bounds]\label{lem:cutoff}
Let $\eta\in C^\infty(\R)$ and set $\eta_\eps(t)=\eta(t/\eps)$.  Then
\[
  \norm{\eta_\eps^{(m)}}{L^\infty(\R)} \leq C_m\,\eps^{-m},
  \qquad m\geq 0,
\]
where $C_m = \norm{\eta^{(m)}}{L^\infty(\R)}$.

Similarly, if $\chi\in C^\infty_c(\R^2)$ and $\chi_\eps(x)=\chi(x/\eps)$, then
\[
  \norm{D^m\chi_\eps}{L^\infty(\R^2)} \leq C_m\,\eps^{-m},
  \qquad m\geq 0.
\]
\end{lemma}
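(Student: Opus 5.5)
The plan is to prove both estimates directly from the chain rule applied to the dilation $t\mapsto t/\eps$. For the one-dimensional statement, I would show by induction on $m$ that
\[
\eta_\eps^{(m)}(t)=\eps^{-m}\,\eta^{(m)}(t/\eps).
\]
The case $m=0$ is the definition of $\eta_\eps$. For the inductive step, I differentiate $\eps^{-m}\eta^{(m)}(t/\eps)$ once more; the inner derivative of $t/\eps$ contributes one extra factor $\eps^{-1}$. Taking the supremum over $t\in\R$, and using that $s=t/\eps$ runs over all of $\R$ as $t$ does (for $\eps>0$), gives
\[
\norm{\eta_\eps^{(m)}}{L^\infty(\R)}=\eps^{-m}\norm{\eta^{(m)}}{L^\infty(\R)}.
\]
This is an identity, so the bound holds with $C_m=\norm{\eta^{(m)}}{L^\infty}$ exactly as stated.

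For the two-dimensional statement, I would argue in the same way with multi-indices. For every multi-index $\alpha$ with $|\alpha|=m$, iterating the chain rule in each variable gives
\[
\partial^\alpha\chi_\eps(x)=\eps^{-m}(\partial^\alpha\chi)(x/\eps).
\]
Hence $D^m\chi_\eps(x)=\eps^{-m}(D^m\chi)(x/\eps)$ as symmetric $m$-linear forms. The dilation $x\mapsto x/\eps$ is a bijection of $\R^2$, so
\[
\norm{D^m\chi_\eps}{L^\infty(\R^2)}=\eps^{-m}\norm{D^m\chi}{L^\infty(\R^2)}.
\]
We may therefore take $C_m=\norm{D^m\chi}{L^\infty}$, with whatever fixed norm on $m$-linear forms is used throughout the paper. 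This constant is finite because $\chi$ is smooth with compact support.

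The only point that needs care is a finiteness issue in the first statement, rather than any real analytic obstacle. For a general $\eta\in C^\infty(\R)$, the quantity $\norm{\eta^{(m)}}{L^\infty}$ may be infinite, in which case the estimate holds trivially but says nothing. In every later application $\eta$ is a standard transition profile, constant outside a compact interval, so all its derivatives are bounded. I would add a one-line remark to this effect and make no further changes.
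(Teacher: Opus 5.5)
Your proof is correct and follows the same route as the paper, which simply says the lemma is an elementary consequence of the chain rule. Your induction giving the exact identity $\eta_\eps^{(m)}(t)=\eps^{-m}\eta^{(m)}(t/\eps)$ and its multi-index analogue supplies the omitted details, and your two remarks are apt: $C_m$ may be infinite for a general $\eta\in C^\infty(\R)$, and in the two-dimensional statement the constant should be $\norm{D^m\chi}{L^\infty}$.
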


This is an elementary consequence of the chain rule (see e.g., \cite{Ziemer} for standard properties of scaled smooth cut-off functions).
\begin{defi}\label{def2}[$C^1$-compatible piecewise quadratic maps].
Let $\Omega\subset\mathbb R^2$ be a polygonal domain and let
$\mathcal P$ be a finite conforming rectangular partition of $\Omega$.

We denote by \(\mathcal C\) the set of cells of the partition, by
\(\mathcal E\) the set of interior open edges of the partition, and by
\(\mathcal V\) the set of interior vertices of the partition.

For each \(e\in\mathcal E\), let \(\overline e\) be the corresponding closed
line segment in \(\overline\Omega\), and denote by
\[
\partial_{\mathrm{rel}}e=\{s_e^-,s_e^+\}
\]
its two endpoints. We set
\[
S:=\bigcup_{e\in\mathcal E}\partial_{\mathrm{rel}}e
\subset \overline\Omega .
\]
Thus \(S\) contains all endpoints of interior edges, including those lying on
\(\partial\Omega\). The local four-cell vertex smoothing is applied only at
the interior vertices \(v\in\mathcal V\); the remaining points of \(S\) are
included in the exceptional neighborhoods.

A family of quadratic polynomials
\[
\{P_C:\mathbb R^2\to\mathbb R^2\}_{C\in\mathcal C}
\]
is said to be $C^1$-compatible on $\mathcal P$ if, for every pair of
distinct cells $C,C'\in\mathcal C$ sharing an interior open edge
\[
e=\partial C\cap \partial C'\cap \Omega,
\]
one has
\[
P_C=P_{C'}\quad\text{on }e,
\qquad
DP_C=DP_{C'}\quad\text{on }e.
\]
Here $P_C=P_{C'}$ and $DP_C=DP_{C'}$ are understood as identities of
the polynomial traces on the relatively open line segment $e$.
\end{defi}
The associated piecewise map $g:\Omega\to\mathbb R^2$, defined by
\[
g=P_C\quad\text{on each }C\in\mathcal C,
\]
and by the common trace on the interfaces, is called a
$C^1$-compatible piecewise quadratic map on $\mathcal P$.

Since the pieces are polynomials, the compatibility identities on open
edges extend to the endpoints of the edges. In particular, at every
interior vertex of a conforming rectangular partition the values and
first derivatives of all incident polynomial pieces agree.
Equivalently, the induced piecewise map belongs to
$C^1(\Omega;\mathbb R^2)$.
\section{Smoothing across a flat interface}
\label{sec:flat}

\subsection{Geometry and mismatch structure}

Consider the model configuration
\[
  Q^- := (-1,0)\times(-1,1),\qquad
  Q^+ := (0,1)\times(-1,1),\qquad
  \Sigma := \{0\}\times(-1,1).
\]

\begin{lemma}[Structure of the mismatch]\label{lem:mismatch}
Let $P^\pm:\R^2\to\R^2$ be quadratic polynomials satisfying
\[
  P^-(0,x_2) = P^+(0,x_2),\quad
  DP^-(0,x_2) = DP^+(0,x_2)
  \quad\text{for all }x_2\in(-1,1).
\]
Then there exists a constant vector $a\in\R^2$ such that
\[
  P^+(x) - P^-(x) = x_1^2\,a.
\]
\end{lemma}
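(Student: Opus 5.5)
Set $R:=P^+-P^-$. Each component of $R$ is a quadratic polynomial, so I will write
\[
R(x)=b+c_1x_1+c_2x_2+A_{11}x_1^2+2A_{12}x_1x_2+A_{22}x_2^2
\]
with $b,c_1,c_2,A_{11},A_{12},A_{22}\in\R^2$. The plan is to use the two compatibility conditions on $\Sigma$ to kill every coefficient except that of $x_1^2$, and then set $a:=A_{11}$.

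\emph{Step 1 (value condition).} The hypothesis $P^-=P^+$ on $\Sigma$ says that the one-variable polynomial
\[
x_2\mapsto R(0,x_2)=b+c_2x_2+A_{22}x_2^2
\]
vanishes for all $x_2\in(-1,1)$. A polynomial in one variable that vanishes on a nonempty open interval is identically zero, so all its coefficients vanish. This gives $b=c_2=A_{22}=0$.

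\emph{Step 2 (derivative condition).} The hypothesis $DP^-=DP^+$ on $\Sigma$ gives $\partial_1R(0,x_2)=0$ for all $x_2\in(-1,1)$. Computing,
\[
\partial_1R(0,x_2)=c_1+2A_{12}x_2 .
\]
The same vanishing-on-an-interval argument yields $c_1=A_{12}=0$.

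The remaining condition $\partial_2R=0$ on $\Sigma$ follows automatically from Step 1 by differentiating along the line, so it gives no new information. What survives is $R(x)=A_{11}x_1^2$, which is the claim with $a=A_{11}$.

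No step is a genuine obstacle. The only point needing care is that the conditions are imposed only on the open segment $(-1,1)$ and not on the whole line. Polynomial identity on a set with infinitely many points (here an interval) already forces coefficient-wise equality, so this causes no difficulty. The argument is carried out componentwise in $\R^2$.
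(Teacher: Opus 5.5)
Your proof is correct and follows the paper's argument essentially verbatim: you expand $R=P^+-P^-$ in coefficients, use the value condition on $\{x_1=0\}$ to kill the constant, $x_2$ and $x_2^2$ terms, and use $\partial_1R(0,x_2)=0$ to kill the $x_1$ and $x_1x_2$ terms. Using vector-valued coefficients instead of one scalar component at a time, and saying explicitly that vanishing on an open interval forces the coefficients to vanish, are only cosmetic differences from the paper.
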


\begin{proof}
Set $R = P^+ - P^-$; it suffices to treat each component separately.
Let $r$ be one scalar component.  Being quadratic,
\[
  r(x_1,x_2) = \alpha x_1^2 + \beta x_1 x_2 + \gamma x_2^2
               + \delta x_1 + \mu x_2 + \nu.
\]
From $r(0,x_2)=0$ for all $x_2$ we get $\gamma=\mu=\nu=0$, so
$r = \alpha x_1^2 + \beta x_1 x_2 + \delta x_1$.
Then $\partial_{x_1}r(0,x_2) = \beta x_2+\delta$ and $\partial_{x_2}r(0,x_2)=0$.
The condition $Dr(0,x_2)=0$ for all $x_2$ gives $\beta=\delta=0$, hence
$r(x_1,x_2)=\alpha x_1^2$.  Applying this to both components yields $R(x)=x_1^2 a$.
\end{proof}

\subsection{The smoothing construction}

Fix $\eta \in C^\infty(\mathbb{R})$ with $\eta(t) = 0$ for $t \leq -1$,
$\eta(t) = 1$ for $t \geq 1$, $0 \leq \eta' \leq 2$, and $\eta'$
compactly supported in $(-1,1)$ (so that $\eta$ is constant near
$t = \pm 1$).
For $\varepsilon, \delta > 0$ set
\[
  \eta_\varepsilon(t) := \eta(t/\varepsilon),
\]
\begin{proposition}
\label{prop:flat}
Let
\[
Q:=(-2,2)\times(-2,2),\qquad
Q^-:=Q\cap\{x_1<0\},\qquad
Q^+:=Q\cap\{x_1>0\},
\]
and let \(P^\pm:\mathbb R^2\to\mathbb R^2\) be quadratic polynomials such that
\[
P^-(0,x_2)=P^+(0,x_2),\qquad
DP^-(0,x_2)=DP^+(0,x_2)
\quad\text{for all }x_2\in(-2,2).
\]
Define \(g:Q\to\mathbb R^2\) by
\[
g(x)=
\begin{cases}
P^-(x), & x_1\le 0,\\
P^+(x), & x_1>0.
\end{cases}
\]
Assume moreover that
\[
\det Dg\ge \lambda>0
\qquad\text{on }Q.
\]

Let \(\eta\in C^\infty(\mathbb R)\) satisfy
\[
\eta(t)=0 \quad\text{for } t\le -1,
\qquad
\eta(t)=1 \quad\text{for } t\ge 1.
\]
For \(\varepsilon>0\), set
\[
\eta_\varepsilon(t):=\eta(t/\varepsilon),
\]
and define
\[
H_\varepsilon(x):=
P^-(x)+\eta_\varepsilon(x_1)\bigl(P^+(x)-P^-(x)\bigr).
\]

Then there exists \(\varepsilon_0>0\) such that for every \(0<\varepsilon<\varepsilon_0\),
the map \(H_\varepsilon\) satisfies:
\begin{enumerate}
\item \(H_\varepsilon\in C^\infty(Q;\mathbb R^2)\);
\item \(H_\varepsilon=g\) on \(Q\cap\{|x_1|\ge \varepsilon\}\);
\item \(H_\varepsilon\to g\) in \(W^{2,1}(Q;\mathbb R^2)\) as \(\varepsilon\downarrow0\);
\item \(\|DH_\varepsilon-Dg\|_{L^\infty(Q)}\to0\) as \(\varepsilon\downarrow0\);
\item \(\det DH_\varepsilon\ge \lambda/2\) on \(Q\).
\end{enumerate}
\end{proposition}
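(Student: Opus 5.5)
The approach is to reduce everything to the explicit formula furnished by Lemma~\ref{lem:mismatch}. Applied on $(-2,2)$, it gives a constant vector $a\in\R^2$ with $P^+-P^-=x_1^2a$. Hence
\[
H_\eps(x)=P^-(x)+\eta_\eps(x_1)\,x_1^2\,a,\qquad g(x)=P^-(x)+\mathbf 1_{\{x_1>0\}}\,x_1^2\,a .
\]
Setting $\phi_\eps(t):=(\eta_\eps(t)-\mathbf 1_{\{t>0\}})\,t^2$, we get $H_\eps-g=\phi_\eps(x_1)\,a$. All the claims then become one-dimensional statements about $\phi_\eps$.

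Items (1) and (2) follow directly from this formula.
\begin{itemize}
\item[(1)] $H_\eps$ is a sum of a polynomial and the product of the smooth function $\eta_\eps(x_1)$ with a polynomial, so it is $C^\infty$.
\item[(2)] For $x_1\le-\eps$ we have $\eta_\eps=0$ and $\mathbf 1=0$; for $x_1\ge\eps$ we have $\eta_\eps=1$ and $\mathbf 1=1$. In both cases $\phi_\eps=0$.
\end{itemize}

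For (3) and (4) I estimate $\phi_\eps$ on $[-\eps,\eps]$ using Lemma~\ref{lem:cutoff}. Since $|\eta_\eps-\mathbf 1|\le C$, $|\eta_\eps'|\le C\eps^{-1}$ and $|\eta_\eps''|\le C\eps^{-2}$, the Leibniz rule gives, for $|t|\le\eps$,
\[
|\phi_\eps(t)|\le C\eps^2,\qquad |\phi_\eps'(t)|\le C\eps,\qquad |\phi_\eps''(t)|\le C.
\]
One point needs checking: $g$ is only piecewise smooth, so I must make sure $D^2g$ has no singular part on $\Sigma$. This holds because $t\mapsto\mathbf 1_{\{t>0\}}t^2$ is $C^{1,1}$ with a bounded weak second derivative; equivalently $g\in C^1$ with piecewise-bounded Hessian, so $g\in W^{2,\infty}(Q)$. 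Consequently the weak derivatives of $H_\eps-g$ are the pointwise a.e.\ ones computed above.

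Given these bounds:
\begin{itemize}
\item $\|DH_\eps-Dg\|_{L^\infty}=|a|\,\sup|\phi_\eps'|\le C\eps\to0$, which is (4).
\item The $W^{2,1}$ norm of $H_\eps-g$ is bounded by $|a|$ times the integral over the strip $\{|x_1|<\eps\}\cap Q$, of area $8\eps$, of $|\phi_\eps|+|\phi_\eps'|+|\phi_\eps''|\le C$. This gives $\le C|a|\eps\to0$, which is (3).
\end{itemize}

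For (5), first note that $Dg$ is continuous on $Q$ and each piece is polynomial, so $\det Dg\ge\lambda$ holds at every point, not just a.e. Next, $Dg$ is bounded on $Q$ by some $M$, because $DP^\pm$ are affine and $Q$ is bounded. The determinant is locally Lipschitz on $2\times 2$ matrices:
\[
|\det A-\det B|\le |A-B|\,(|A|+|B|).
\]
Therefore
\[
\det DH_\eps\ge\lambda-C\eps\,(2M+C\eps),
\]
and choosing $\eps_0$ small makes this at least $\lambda/2$.

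The only real subtlety, which I expect to be the main obstacle if anything is, is confirming that $g\in W^{2,1}(Q)$ with no jump measure on $\Sigma$. Otherwise the $W^{2,1}$ convergence would fail. This is exactly what the $C^1$ compatibility encoded in Lemma~\ref{lem:mismatch} provides: the quadratic vanishing $x_1^2a$ absorbs the $\eps^{-2}$ and $\eps^{-1}$ losses from the cutoff. Everything else is routine bookkeeping.
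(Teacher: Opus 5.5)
Your proof is correct and follows the same route as the paper. Both arguments use Lemma~\ref{lem:mismatch} to write $H_\varepsilon-g=(\eta_\varepsilon(x_1)-\mathbf 1_{\{x_1>0\}})\,x_1^2a$, apply the scaled cut-off bounds to get the $\varepsilon^2,\varepsilon,1$ estimates on the strip $\{|x_1|<\varepsilon\}$, and conclude from the stability of the determinant under uniform perturbations of $Dg$. Your version also makes explicit two points the paper leaves implicit: that $g\in W^{2,\infty}(Q)$, so $D^2g$ has no singular part on $\Sigma$, and the quantitative bound $|\det A-\det B|\le|A-B|(|A|+|B|)$ together with the boundedness of $Dg$ on $Q$.
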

\begin{proof}
By Lemma 3.1, there exists \(a\in\mathbb R^2\) such that
\[
P^+(x)-P^-(x)=x_1^2 a
\qquad\text{for all }x\in\mathbb R^2.
\]
Hence
\[
H_\varepsilon(x)=P^-(x)+\eta_\varepsilon(x_1)x_1^2 a,
\]
so \(H_\varepsilon\in C^\infty(Q;\mathbb R^2)\).

Since \(\eta_\varepsilon(x_1)=0\) for \(x_1\le -\varepsilon\) and
\(\eta_\varepsilon(x_1)=1\) for \(x_1\ge \varepsilon\), we have
\[
H_\varepsilon(x)=P^-(x)=g(x)\quad\text{if }x_1\le -\varepsilon,
\]
and
\[
H_\varepsilon(x)=P^-(x)+x_1^2a=P^+(x)=g(x)\quad\text{if }x_1\ge \varepsilon.
\]
Thus \(H_\varepsilon=g\) on \(Q\cap\{|x_1|\ge\varepsilon\}\).

Moreover,
\[
H_\varepsilon-g=
\bigl(\eta_\varepsilon(x_1)-\mathbf 1_{\{x_1>0\}}\bigr)x_1^2 a,
\]
so the support of \(H_\varepsilon-g\) is contained in \(\{|x_1|<\varepsilon\}\).
From this and the explicit form above one obtains
\[
|H_\varepsilon-g|\le C\varepsilon^2,
\qquad
|DH_\varepsilon-Dg|\le C\varepsilon,
\qquad
|D^2H_\varepsilon-D^2g|\le C\,\mathbf 1_{\{|x_1|<\varepsilon\}},
\]
whence
\[
H_\varepsilon\to g\quad\text{in }W^{2,1}(Q;\mathbb R^2),
\qquad
\|DH_\varepsilon-Dg\|_{L^\infty(Q)}\to0.
\]

Finally, since \(DH_\varepsilon\to Dg\) uniformly and \(\det Dg\ge\lambda\),
for \(\varepsilon\) small enough we get
\[
\det DH_\varepsilon\ge \lambda/2
\qquad\text{on }Q.
\]
\end{proof}
\begin{proposition}[Tangentially localized edge smoothing]\label{prop:localized_edge_smoothing}
Let
\[
Q:=(-2,2)\times(-2,2),\qquad
Q^-:=Q\cap\{x_1<0\},\qquad
Q^+:=Q\cap\{x_1>0\},
\]
and let \(P^\pm:\mathbb R^2\to\mathbb R^2\) be quadratic polynomials such that
\[
P^-(0,x_2)=P^+(0,x_2),\qquad
DP^-(0,x_2)=DP^+(0,x_2)
\quad\text{for all }x_2\in(-2,2).
\]
Define \(g:Q\to\mathbb R^2\) by
\[
g(x)=
\begin{cases}
P^-(x), & x_1\le 0,\\
P^+(x), & x_1>0.
\end{cases}
\]
Assume moreover that
\[
\det Dg\ge \lambda>0
\qquad\text{on }Q.
\]

Let \(H_\varepsilon\) be the flat-interface smoothing given by
Proposition \ref{prop:flat}. Let \(\theta\in C^\infty(\mathbb R)\)
satisfy
\[
0\le \theta\le 1,\qquad
\theta(t)=1 \ \text{for } |t|\le 1,\qquad
\theta(t)=0 \ \text{for } |t|\ge \frac32.
\]
For \(0<\varepsilon<\varepsilon_0\), define
\[
G_\varepsilon(x_1,x_2)
:=
g(x_1,x_2)+\theta(x_2)\bigl(H_\varepsilon(x_1,x_2)-g(x_1,x_2)\bigr).
\]

Then, for \(\varepsilon>0\) sufficiently small, the map \(G_\varepsilon\) satisfies:
\begin{enumerate}
\item \(G_\varepsilon\in C^1(Q;\mathbb R^2)\);
\item \(G_\varepsilon=H_\varepsilon\) on \(Q\cap\{|x_2|\le 1\}\);
\item \(G_\varepsilon=g\) on \(Q\cap\{|x_2|\ge 3/2\}\);
\item
\[
\operatorname{supp}(G_\varepsilon-g)
\subset
Q\cap\{|x_1|<\varepsilon,\ |x_2|<3/2\};
\]
\item \(G_\varepsilon\in C^\infty\bigl(Q\cap\{|x_2|<1\}\bigr)\);
\item
\[
G_\varepsilon\to g
\qquad\text{in }W^{2,1}(Q;\mathbb R^2)
\quad\text{as }\varepsilon\downarrow0;
\]
\item
\[
\|DG_\varepsilon-Dg\|_{L^\infty(Q)}\to0
\qquad\text{as }\varepsilon\downarrow0;
\]
\item
\[
\det DG_\varepsilon\ge \frac{\lambda}{2}
\qquad\text{on }Q.
\]
\end{enumerate}
\end{proposition}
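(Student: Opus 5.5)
The plan is to reduce everything to the explicit form of the correction. By Lemma~\ref{lem:mismatch} we have $P^+-P^-=x_1^2a$, and in the proof of Proposition~\ref{prop:flat} we saw that
\[
H_\varepsilon-g=\bigl(\eta_\varepsilon(x_1)-\mathbf 1_{\{x_1>0\}}\bigr)x_1^2a=:\phi_\varepsilon(x_1)\,a .
\]
Hence
\[
G_\varepsilon=g+\theta(x_2)\phi_\varepsilon(x_1)\,a .
\]
The scalar function $\phi_\varepsilon$ is supported in $\{|x_1|<\varepsilon\}$ and satisfies $|\phi_\varepsilon|\le C\varepsilon^2$. On $x_1\le0$ it equals $\eta_\varepsilon x_1^2$, and on $x_1>0$ it equals $(\eta_\varepsilon-1)x_1^2$. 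Both expressions, together with their first derivatives, vanish at $x_1=0$. Therefore $\phi_\varepsilon\in C^1(\mathbb R)$, with $|\phi_\varepsilon'|\le C\varepsilon$ and $|\phi_\varepsilon''|\le C$ on each side. The only defect is a jump of $\phi_\varepsilon''$ across $x_1=0$, of size $-2$.

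\textbf{Regularity and support.} Since $g\in C^1(Q)$ by $C^1$-compatibility and $\theta(x_2)\phi_\varepsilon(x_1)$ is $C^1$, item 1 follows. On $\{|x_2|\le1\}$ we have $\theta=1$, so $G_\varepsilon=H_\varepsilon$, which gives item 2. Item 5 then follows from Proposition~\ref{prop:flat}(1), applied on the open set $\{|x_2|<1\}$. On $\{|x_2|\ge3/2\}$ we have $\theta=0$, which gives item 3. Item 4 follows from $\operatorname{supp}\theta\subset[-3/2,3/2]$ and $\operatorname{supp}\phi_\varepsilon\subset(-\varepsilon,\varepsilon)$. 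Strictly, we get the closed support in $\{|x_1|\le\varepsilon,\ |x_2|\le3/2\}$. To obtain the stated open inclusion, I would use that $\eta'$ has compact support in $(-1,1)$, so that $\phi_\varepsilon$ vanishes near $|x_1|=\varepsilon$, and read the $x_2$-inclusion up to a closure convention, as the statement intends.

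\textbf{Estimates.} We have
\[
DG_\varepsilon-Dg=\bigl(\theta\phi_\varepsilon',\ \theta'\phi_\varepsilon\bigr)\otimes a ,
\]
whose sup norm is at most $C(\varepsilon+\varepsilon^2)\to0$; this is item 7. Item 8 follows exactly as in Proposition~\ref{prop:flat}. The uniform convergence $DG_\varepsilon\to Dg$, together with the bound $\det Dg\ge\lambda$ and the boundedness of $Dg$ on $Q$ (finitely many quadratic pieces), gives
\[
|\det DG_\varepsilon-\det Dg|\le C\|DG_\varepsilon-Dg\|_\infty<\lambda/2
\]
for small $\varepsilon$. For item 6, the $L^1$ norms of $G_\varepsilon-g$ and of its first derivatives are $O(\varepsilon^3)$ and $O(\varepsilon^2)$ respectively. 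The key point is the weak second derivative. Because $G_\varepsilon-g$ is $C^1$ and piecewise smooth on $\{x_1<0\}$ and $\{x_1>0\}$, no singular part appears on $\{x_1=0\}$. The weak Hessian therefore equals the pointwise one:
\[
\theta\phi_\varepsilon'',\qquad \theta'\phi_\varepsilon',\qquad \theta''\phi_\varepsilon ,
\]
each multiplied by $a$, and these are bounded by $C$, $C\varepsilon$ and $C\varepsilon^2$ on a set of measure $O(\varepsilon)$. Hence
\[
\|D^2(G_\varepsilon-g)\|_{L^1}=O(\varepsilon)\to0 .
\]

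\textbf{Main obstacle.} The main point to verify is that the weak Hessian carries no jump measure on $\Sigma$, both in the region $1<|x_2|<3/2$, where $G_\varepsilon$ is only $C^1$, and for $g$ itself. Both facts follow from the $C^1$ matching across $x_1=0$: a distributional second derivative of a $C^1$ function that is piecewise $C^2$ on the two sides of a line has no singular part. This is exactly what $C^1$-compatibility guarantees, so I expect this step to be routine once stated.
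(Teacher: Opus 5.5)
Your proposal is correct and follows essentially the same route as the paper: write $G_\varepsilon-g=\theta(x_2)(H_\varepsilon-g)$, use Lemma~\ref{lem:mismatch} to get the bounds $O(\varepsilon^2)$, $O(\varepsilon)$ and $O(1)\mathbf 1_{\{|x_1|<\varepsilon\}}$ on the perturbation and its first and second derivatives, apply the product rule, and deduce the Jacobian bound from the uniform convergence of the gradients. Your explicit scalar profile $\phi_\varepsilon$, your remark that the $C^1$ matching excludes a jump part in the weak Hessian, and your caveat about closed supports in item 4 make precise points that the paper's proof leaves implicit.
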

\begin{proof}
Set
\[
\Psi_\varepsilon:=H_\varepsilon-g.
\]
Then
\[
G_\varepsilon=g+\theta(x_2)\Psi_\varepsilon.
\]

By Proposition \ref{prop:flat}, for \(\varepsilon>0\) sufficiently small,
\[
H_\varepsilon\in C^\infty(Q;\mathbb R^2),
\qquad
H_\varepsilon=g \quad\text{on } Q\cap\{|x_1|\ge \varepsilon\},
\]
and
\[
H_\varepsilon\to g \quad\text{in }W^{2,1}(Q;\mathbb R^2),
\qquad
\|DH_\varepsilon-Dg\|_{L^\infty(Q)}\to 0
\]
as \(\varepsilon\downarrow0\). In particular,
\[
\operatorname{supp}\Psi_\varepsilon \subset Q\cap\{|x_1|<\varepsilon\}.
\]

Since \(\theta(x_2)=0\) for \(|x_2|\ge 3/2\), we immediately obtain
\[
\operatorname{supp}(G_\varepsilon-g)
=
\operatorname{supp}\bigl(\theta(x_2)\Psi_\varepsilon\bigr)
\subset
Q\cap\{|x_1|<\varepsilon,\ |x_2|<3/2\},
\]
which proves (4).

Moreover, since \(\theta=1\) on \(\{|x_2|\le 1\}\), we have
\[
G_\varepsilon=g+\Psi_\varepsilon=H_\varepsilon
\qquad\text{on }Q\cap\{|x_2|\le 1\},
\]
which proves (2). Since \(\theta=0\) on \(\{|x_2|\ge 3/2\}\), we also have
\[
G_\varepsilon=g
\qquad\text{on }Q\cap\{|x_2|\ge 3/2\},
\]
which proves (3).

We now prove (1). By the compatibility assumptions across \(\{x_1=0\}\), the piecewise-defined
map \(g\) belongs to \(C^1(Q;\mathbb R^2)\). Since \(H_\varepsilon\in C^\infty(Q;\mathbb R^2)\),
it follows that
\[
\Psi_\varepsilon=H_\varepsilon-g\in C^1(Q;\mathbb R^2).
\]
Therefore \(\theta(x_2)\Psi_\varepsilon\in C^1(Q;\mathbb R^2)\), and hence
\[
G_\varepsilon=g+\theta(x_2)\Psi_\varepsilon\in C^1(Q;\mathbb R^2).
\]
This proves (1).

Property (5) follows immediately from (2), because \(H_\varepsilon\in C^\infty(Q;\mathbb R^2)\).

We next prove (6) and (7). By Lemma 3.1 there exists \(a\in\mathbb R^2\) such that
\[
P^+(x)-P^-(x)=x_1^2 a
\qquad\text{for all }x\in\mathbb R^2.
\]
Hence, exactly as in Proposition \ref{prop:flat}, there exists a constant \(C>0\),
independent of \(\varepsilon\), such that
\[
|\Psi_\varepsilon|\le C\varepsilon^2,
\qquad
|D\Psi_\varepsilon|\le C\varepsilon,
\qquad
|D^2\Psi_\varepsilon|\le C\,\mathbf 1_{\{|x_1|<\varepsilon\}}
\quad\text{a.e. in }Q.
\]

Since
\[
G_\varepsilon-g=\theta(x_2)\Psi_\varepsilon,
\]
we have
\[
D(G_\varepsilon-g)
=
\theta(x_2)\,D\Psi_\varepsilon
+
\theta'(x_2)\,\Psi_\varepsilon\otimes e_2,
\]
and, a.e. in \(Q\),
\[
D^2(G_\varepsilon-g)
=
\theta(x_2)\,D^2\Psi_\varepsilon
+
\theta'(x_2)\bigl(D\Psi_\varepsilon\otimes e_2+e_2\otimes D\Psi_\varepsilon\bigr)
+
\theta''(x_2)\,\Psi_\varepsilon\, e_2\otimes e_2.
\]
Since \(\theta,\theta',\theta''\) are bounded, the previous estimates imply
\[
|G_\varepsilon-g|\le C\varepsilon^2,
\qquad
|D(G_\varepsilon-g)|\le C\varepsilon,
\]
and
\[
|D^2(G_\varepsilon-g)|
\le
C\,\mathbf 1_{\{|x_1|<\varepsilon\}}
+
C\varepsilon
+
C\varepsilon^2
\quad\text{a.e. in }Q.
\]
Integrating over \(Q\), we obtain
\[
\|G_\varepsilon-g\|_{W^{2,1}(Q)}\to0
\qquad\text{as }\varepsilon\downarrow0,
\]
which proves (6). The bound
\[
\|DG_\varepsilon-Dg\|_{L^\infty(Q)}\to0
\]
follows at once from the estimate on \(D(G_\varepsilon-g)\), proving (7).

Finally, by (7) we have \(DG_\varepsilon\to Dg\) uniformly on \(Q\). Since
\[
\det Dg\ge \lambda>0
\qquad\text{on }Q,
\]
the continuity of the determinant implies that, for \(\varepsilon>0\) sufficiently small,
\[
\det DG_\varepsilon\ge \frac{\lambda}{2}
\qquad\text{on }Q.
\]
This proves (8) and completes the proof.
\end{proof}

\section{Smoothing near a vertex}
\label{sec:vertex}

\subsection{Second-order structure at the vertex}

Consider the four quadrants
\[
  Q_1=(0,1)^2,\quad
  Q_2=(-1,0)\times(0,1),\quad
  Q_3=(-1,0)^2,\quad
  Q_4=(0,1)\times(-1,0).
\]

\begin{lemma}[Vanishing of the mismatch at the origin]
Let
\[
B:=(-1,1)^2,
\]
and let
\[
Q_1=(0,1)^2,\qquad
Q_2=(-1,0)\times(0,1),\qquad
Q_3=(-1,0)^2,\qquad
Q_4=(0,1)\times(-1,0)
\]
be the four open coordinate quadrants in $B$. Let
\[
g\in C^1(B;\mathbb R^2)
\]
be such that, for each $i=1,\dots,4$,
\[
g|_{Q_i}=P_i,
\]
where $P_i:\mathbb R^2\to\mathbb R^2$ is a quadratic polynomial.

Then
\[
P_i(0)=P_j(0),\qquad DP_i(0)=DP_j(0)
\qquad\text{for all }i,j\in\{1,2,3,4\}.
\]
Consequently, fixing $P_*:=P_1$ and setting
\[
R_i:=P_i-P_*,
\]
each $R_i$ is a quadratic polynomial satisfying
\[
R_i(0)=0,\qquad DR_i(0)=0.
\]
In particular, there exists a constant $C>0$ such that
\[
|R_i(x)|\le C|x|^2,\qquad |DR_i(x)|\le C|x|,\qquad |D^2R_i(x)|\le C
\]
for every $x\in B$ and every $i=1,\dots,4$.
\end{lemma}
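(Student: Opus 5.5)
The plan is to pass to the limit at the origin from inside each quadrant, using that $g$ and $Dg$ are continuous on all of $B$. Since $Q_i$ is open and $0\in\overline{Q_i}$, we can pick a sequence $x^{(k)}\in Q_i$ with $x^{(k)}\to 0$. On $Q_i$ we have $g=P_i$ and hence $Dg=DP_i$, because $Q_i$ is open and derivatives of $g$ there are computed from $P_i$. The polynomial $P_i$ and its derivative are continuous on $\mathbb R^2$, and $g\in C^1(B)$. Therefore
\[
P_i(0)=\lim_k P_i(x^{(k)})=\lim_k g(x^{(k)})=g(0),\qquad
DP_i(0)=\lim_k Dg(x^{(k)})=Dg(0).
\]
Since $i$ was arbitrary, all four pieces share the value $g(0)$ and the derivative $Dg(0)$ at the origin. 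This gives the first claim. Alternatively, one could invoke the remark after Definition~\ref{def2}, but the limit argument is self-contained and needs only $g\in C^1(B)$.

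Next, set $R_i=P_i-P_1$. Each $R_i$ is a difference of quadratic polynomials, hence itself a polynomial of degree at most two. By the first step, $R_i(0)=0$ and $DR_i(0)=0$. Taylor's formula is exact for quadratics, so
\[
R_i(x)=\tfrac12 D^2R_i\,[x,x],\qquad DR_i(x)=D^2R_i\,x,
\]
where $D^2R_i=D^2P_i-D^2P_1$ is a constant symmetric bilinear form (one per component). We then take
\[
C:=\max_{1\le i\le 4}|D^2R_i|,
\]
or any constant at least this large. With this choice the three bounds
\[
|R_i(x)|\le \tfrac12 C|x|^2\le C|x|^2,\qquad |DR_i(x)|\le C|x|,\qquad |D^2R_i|\le C
\]
hold for every $x\in\mathbb R^2$, in particular on $B$. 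The constant is finite because there are only four pieces.

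I do not expect a genuine obstacle here. The only point needing care is the identification $Dg=DP_i$ on the open quadrant together with the continuity of $Dg$ up to the origin. This is exactly what the hypothesis $g\in C^1(B;\mathbb R^2)$ supplies, and it lets the mismatch vanish to second order without any edge-by-edge use of Lemma~\ref{lem:mismatch}.
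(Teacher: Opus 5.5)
Your proof is correct and follows essentially the same route as the paper, which sets $F_i:=P_i-g$, notes $F_i\equiv 0$ and $DF_i\equiv 0$ on the open quadrant $Q_i$, and passes to the limit along a sequence in $Q_i$ tending to $0$ using $g\in C^1(B;\mathbb R^2)$, before concluding that each $R_i$ is a homogeneous quadratic. Your exact Taylor expansion with $C:=\max_i|D^2R_i|$ (enlarged if needed so that $C>0$) simply makes the paper's final bound explicit.
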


\begin{proof}
Fix $i\in\{1,2,3,4\}$ and define
\[
F_i:=P_i-g \in C^1(B;\mathbb R^2).
\]
Since $g=P_i$ on the open set $Q_i$, we have
\[
F_i=0 \qquad \text{on } Q_i.
\]
Therefore
\[
DF_i=0 \qquad \text{on } Q_i,
\]
because the derivative of a constant map is zero.

We first prove that
\[
P_i(0)=g(0).
\]
Indeed, let $x_n\in Q_i$ be any sequence such that $x_n\to 0$. Since $F_i=0$ on $Q_i$,
\[
F_i(x_n)=0 \qquad \text{for all } n.
\]
Passing to the limit and using continuity of $F_i$ at $0$, we obtain
\[
F_i(0)=0,
\]
that is,
\[
P_i(0)=g(0).
\]

Next we prove that
\[
DP_i(0)=Dg(0).
\]
Again, since $DF_i=0$ on $Q_i$, for any sequence $x_n\in Q_i$ with $x_n\to 0$ we have
\[
DF_i(x_n)=0 \qquad \text{for all } n.
\]
Passing to the limit and using continuity of $DF_i$ at $0$, we get
\[
DF_i(0)=0,
\]
hence
\[
DP_i(0)-Dg(0)=0.
\]
Therefore
\[
DP_i(0)=Dg(0).
\]

Since $i$ was arbitrary, it follows that
\[
P_i(0)=g(0),\qquad DP_i(0)=Dg(0)
\qquad\text{for every } i,
\]
and consequently
\[
P_i(0)=P_j(0),\qquad DP_i(0)=DP_j(0)
\qquad\text{for all } i,j.
\]

Now fix $P_*:=P_1$ and define
\[
R_i:=P_i-P_*.
\]
Then each $R_i$ is quadratic and
\[
R_i(0)=P_i(0)-P_1(0)=0,\qquad
DR_i(0)=DP_i(0)-DP_1(0)=0.
\]
Thus each component of $R_i$ is a polynomial of degree at most two with vanishing
constant and linear terms, hence a homogeneous quadratic polynomial. It follows that,
for a suitable constant $C>0$ independent of $x$,
\[
|R_i(x)|\le C|x|^2,\qquad |DR_i(x)|\le C|x|,\qquad |D^2R_i(x)|\le C
\qquad\text{for all } x\in B.
\]
This completes the proof.
\end{proof}
\subsection{The vertex smoothing construction}

Fix $\chi\in C^\infty_c(B_1(0))$ radial, $0\leq\chi\leq 1$,
$\chi\equiv 1$ on $B_{1/2}(0)$, and set $\chi_\eps(x):=\chi(x/\eps)$.

\begin{proposition}[Vertex smoothing]\label{prop:vertex_smoothing}
Let
\[
B:=(-1,1)^2,
\]
and let
\[
Q_1=(0,1)^2,\qquad
Q_2=(-1,0)\times(0,1),\qquad
Q_3=(-1,0)^2,\qquad
Q_4=(0,1)\times(-1,0)
\]
be the four open coordinate quadrants in $B$. Let
\[
g\in C^1(B;\mathbb R^2)
\]
satisfy:
\begin{enumerate}
\item for each $i=1,\dots,4$, one has
\[
g|_{Q_i}=P_i,
\]
where $P_i:\mathbb R^2\to\mathbb R^2$ is a quadratic polynomial;
\item
\[
\det Dg\ge \lambda>0
\qquad\text{on }B.
\]
\end{enumerate}

Fix $P_*:=P_1$. Let $\chi\in C_c^\infty(B_1(0))$ be radial, with
\[
0\le \chi\le 1,
\qquad
\chi\equiv 1 \text{ on } B_{1/2}(0),
\]
and define
\[
\chi_\varepsilon(x):=\chi\!\left(\frac{x}{\varepsilon}\right),
\qquad 0<\varepsilon<1.
\]
Set
\[
g_\varepsilon(x):=P_*(x)+\bigl(1-\chi_\varepsilon(x)\bigr)\bigl(g(x)-P_*(x)\bigr),
\qquad x\in B.
\]

Then:
\begin{enumerate}
\item
\[
g_\varepsilon\in C^1(B;\mathbb R^2);
\]
\item
\[
g_\varepsilon=P_*
\quad\text{on }B_{\varepsilon/2}(0),
\qquad
g_\varepsilon=g
\quad\text{on }B\setminus B_\varepsilon(0);
\]
\item $g_\varepsilon$ is smooth on $B_{\varepsilon/2}(0)$ and on each open quadrant $Q_i$;
\item
\[
g_\varepsilon\to g
\quad\text{in }W^{2,1}(B;\mathbb R^2)
\qquad\text{as }\varepsilon\downarrow 0;
\]
\item
\[
\|Dg_\varepsilon-Dg\|_{L^\infty(B)}\to 0
\qquad\text{as }\varepsilon\downarrow 0;
\]
\item there exists $\varepsilon_0\in(0,1)$ such that for every $0<\varepsilon<\varepsilon_0$,
\[
\det Dg_\varepsilon\ge \frac{\lambda}{2}
\qquad\text{on }B.
\]
\end{enumerate}
\end{proposition}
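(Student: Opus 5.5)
The plan is to write everything in terms of the remainder $\Phi:=g-P_*$ and the product structure $g_\varepsilon-g=-\chi_\varepsilon\,\Phi$. This reduces all six claims to properties of $\Phi$, supplied by the preceding lemma (vanishing of the mismatch at the origin), and to the scaled cut-off bounds of Lemma~\ref{lem:cutoff}.

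On each quadrant $Q_i$ one has $\Phi=R_i$. Since $g\in C^1(B)$ and $P_*$ is a polynomial, $\Phi\in C^1(B)$. By the lemma,
\[
|\Phi(x)|\le C|x|^2,\qquad |D\Phi(x)|\le C|x|
\]
on each $Q_i$, and these extend to the axes by continuity. Moreover $D^2\Phi$ exists a.e. with $|D^2\Phi|\le C$. Indeed, $g$ is $C^1$ and piecewise polynomial across the axes, so $D\Phi$ is Lipschitz. Hence $\Phi\in W^{2,\infty}$, and its weak Hessian equals $D^2R_i$ on $Q_i$, with no singular part on the axes.

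Items (1)--(3) are then immediate. The map $g_\varepsilon=g-\chi_\varepsilon\Phi$ is a sum and product of $C^1$ functions. Since $\chi_\varepsilon=1$ on $B_{\varepsilon/2}$, we get $g_\varepsilon=P_*$ there, and $\operatorname{supp}\chi_\varepsilon\subset B_\varepsilon$ gives $g_\varepsilon=g$ outside $B_\varepsilon$. Smoothness on $B_{\varepsilon/2}$ is clear. On $Q_i$ one has $g_\varepsilon=P_*+(1-\chi_\varepsilon)R_i$, which is smooth.

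For (4) and (5), differentiate the product:
\[
D(g_\varepsilon-g)=-\chi_\varepsilon D\Phi-\Phi\otimes D\chi_\varepsilon ,
\]
\[
D^2(g_\varepsilon-g)=-\chi_\varepsilon D^2\Phi-2\,D\Phi\odot D\chi_\varepsilon-\Phi\, D^2\chi_\varepsilon \quad\text{a.e.}
\]
Everything is supported in $B_\varepsilon$, where $|x|<\varepsilon$. Lemma~\ref{lem:cutoff} gives $|D^m\chi_\varepsilon|\le C_m\varepsilon^{-m}$. This yields
\[
|g_\varepsilon-g|\le C\varepsilon^2,\qquad |D(g_\varepsilon-g)|\le C\varepsilon+C\varepsilon^2\varepsilon^{-1}\le C\varepsilon,\qquad |D^2(g_\varepsilon-g)|\le C\quad\text{on }B_\varepsilon .
\]
The crucial point is the scaling cancellation: the quadratic vanishing of $\Phi$ exactly compensates the $\varepsilon^{-2}$ growth of $D^2\chi_\varepsilon$. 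Integrating the bounds over $B_\varepsilon$ gives $\|g_\varepsilon-g\|_{W^{2,1}}\le C\varepsilon^2\to0$, and the uniform gradient bound gives (5).

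For (6), use (5). We have $\det Dg\ge\lambda$ on $B$, and $Dg$ is bounded on $B$ because it is piecewise polynomial on a bounded set. The determinant is locally Lipschitz on bounded sets of matrices, so
\[
|\det Dg_\varepsilon-\det Dg|\le C\|Dg_\varepsilon-Dg\|_\infty\le C\varepsilon .
\]
Choosing $\varepsilon_0$ with $C\varepsilon_0\le\lambda/2$ finishes the proof.

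The step requiring the most care is the second-derivative bound in (4). One has to justify that the weak Hessian of $g_\varepsilon$ carries no singular part concentrated on the coordinate axes. I would handle this exactly via the $C^1$ (indeed $W^{2,\infty}$) regularity of $\Phi$ noted above: $g_\varepsilon$ is $C^1$ with Lipschitz gradient, so its distributional Hessian coincides with the a.e. pointwise one. The remaining step is checking that the estimates $|\Phi|\le C|x|^2$ and $|D\Phi|\le C|x|$ hold uniformly across quadrants with a single constant, which follows from the finitely many $R_i$.
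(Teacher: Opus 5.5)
Your proposal is correct and follows essentially the same route as the paper. You write $g_\varepsilon-g=-\chi_\varepsilon(g-P_*)$, use the quadratic vanishing of the remainder against the scaled cut-off bounds to get $|D(g_\varepsilon-g)|\le C\varepsilon$ and $|D^2(g_\varepsilon-g)|\le C$ on $B_\varepsilon(0)$, and deduce (6) from uniform convergence of the gradients. Your extra remarks are valid refinements of points the paper leaves implicit: that $Dg$ is Lipschitz, so the a.e.\ Hessian has no singular part on the axes; the explicit $O(\varepsilon^2)$ rate in $W^{2,1}$; and the Lipschitz estimate for the determinant.
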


\begin{proof}
Set
\[
R:=g-P_*.
\]
By Lemma 4.1, for each $i=1,\dots,4$ the restriction $R|_{Q_i}$ is a quadratic polynomial,
and there exists a constant $C>0$ such that
\[
|R(x)|\le C|x|^2,
\qquad
|DR(x)|\le C|x|,
\qquad
|D^2R(x)|\le C
\]
for every $x\in Q_i$, hence for a.e. $x\in B$.

By definition,
\[
g_\varepsilon=P_*+(1-\chi_\varepsilon)R.
\]
Since $R\in C^1(B;\mathbb R^2)$ and $\chi_\varepsilon\in C_c^\infty(B)$, it follows that
\[
g_\varepsilon\in C^1(B;\mathbb R^2),
\]
which proves (1).

Because $\chi_\varepsilon\equiv 1$ on $B_{\varepsilon/2}(0)$, we have
\[
g_\varepsilon=P_*
\qquad\text{on }B_{\varepsilon/2}(0).
\]
Because $\chi_\varepsilon\equiv 0$ on $B\setminus B_\varepsilon(0)$, we have
\[
g_\varepsilon=g
\qquad\text{on }B\setminus B_\varepsilon(0).
\]
This proves (2). Since $P_*$ is smooth, $g_\varepsilon$ is smooth on $B_{\varepsilon/2}(0)$; moreover,
on each open quadrant $Q_i$ the map $g$ is quadratic, hence smooth, and therefore
$g_\varepsilon$ is smooth on each $Q_i$. This proves (3).

We now estimate the derivatives. Since
\[
g_\varepsilon=P_*+(1-\chi_\varepsilon)R,
\]
we have on $B$
\[
Dg_\varepsilon
=
DP_*+(1-\chi_\varepsilon)\,DR-(\nabla \chi_\varepsilon)\otimes R,
\]
and therefore
\[
Dg_\varepsilon-Dg
=
-\chi_\varepsilon\,DR-(\nabla \chi_\varepsilon)\otimes R.
\]
Using the scaled cutoff bounds
\[
|\nabla\chi_\varepsilon(x)|\le C\varepsilon^{-1},
\]
together with
\[
|R(x)|\le C|x|^2,
\qquad
|DR(x)|\le C|x|,
\]
we obtain for $|x|<\varepsilon$:
\[
|Dg_\varepsilon(x)-Dg(x)|
\le
C|x|+C\varepsilon^{-1}|x|^2
\le C\varepsilon.
\]
Outside $B_\varepsilon(0)$ one has $g_\varepsilon=g$, so
\[
\|Dg_\varepsilon-Dg\|_{L^\infty(B)}\le C\varepsilon\to 0,
\]
which proves (5).

Next, a.e. in $B$,
\[
D^2g_\varepsilon
=
D^2P_*+(1-\chi_\varepsilon)D^2R
-(\nabla\chi_\varepsilon)\otimes DR
-DR\otimes(\nabla\chi_\varepsilon)
-D^2\chi_\varepsilon\otimes R.
\]
Hence
\[
D^2g_\varepsilon-D^2g
=
-\chi_\varepsilon D^2R
-(\nabla\chi_\varepsilon)\otimes DR
-DR\otimes(\nabla\chi_\varepsilon)
-D^2\chi_\varepsilon\otimes R.
\]
Using
\[
|\nabla\chi_\varepsilon(x)|\le C\varepsilon^{-1},
\qquad
|D^2\chi_\varepsilon(x)|\le C\varepsilon^{-2},
\]
and the bounds on $R$, we find for a.e. $|x|<\varepsilon$:
\[
|D^2g_\varepsilon(x)-D^2g(x)|
\le
C + C\varepsilon^{-1}|x| + C\varepsilon^{-2}|x|^2
\le C.
\]
Moreover, the support of $g_\varepsilon-g$ is contained in $B_\varepsilon(0)$. Therefore
\[
\|D^2g_\varepsilon-D^2g\|_{L^1(B)}
\le C|B_\varepsilon(0)|
\to 0.
\]
Similarly,
\[
|g_\varepsilon-g|
=
|\chi_\varepsilon R|
\le C|x|^2
\le C\varepsilon^2
\quad\text{for }|x|<\varepsilon,
\]
hence
\[
\|g_\varepsilon-g\|_{L^1(B)}
\le C\varepsilon^2|B_\varepsilon(0)|
\to 0,
\]
and from the gradient estimate above,
\[
\|Dg_\varepsilon-Dg\|_{L^1(B)}
\le C\varepsilon |B_\varepsilon(0)|
\to 0.
\]
Combining these three convergences, we get
\[
g_\varepsilon\to g
\quad\text{in }W^{2,1}(B;\mathbb R^2),
\]
which proves (4).

Finally, by (5) we have
\[
Dg_\varepsilon\to Dg
\quad\text{uniformly on }B.
\]
Since the determinant is continuous and
\[
\det Dg\ge \lambda>0
\quad\text{on }B,
\]
there exists $\varepsilon_0\in(0,1)$ such that for every $0<\varepsilon<\varepsilon_0$,
\[
\det Dg_\varepsilon\ge \frac{\lambda}{2}
\qquad\text{on }B.
\]
This proves (6) and completes the proof.
\end{proof}
The map $g_\eps$ produced above is $C^1$ globally but \emph{not} $C^2$ across
the coordinate axes inside the annulus $B_\eps(0)\setminus B_{\eps/2}(0)$,
since the second derivatives of the $P_i$ from adjacent quadrants need
not match. This limitation is intrinsic to the construction and cannot be
removed without imposing additional compatibility conditions on the $P_i$.

\section{Global smoothing of piecewise quadratic homeomorphisms}
\label{sec:global}

\subsection{Setting and assumptions}

Let $\Omega \subset \mathbb R^2$ be a bounded open connected polygonal domain,
and let $\mathcal P$ be a finite conforming rectangular partition of $\Omega$,
namely every interior edge is shared by exactly two rectangles and every interior
vertex is incident to exactly four rectangles.
We denote by \(\mathcal C\) the set of cells of the partition, by \(E\) the set of interior edges of the
partition, and by \(V\) the set of interior vertices of the partition.

For each \(e\in\mathcal E\), let \(\overline e\subset\overline\Omega\) denote
the corresponding closed line segment, and let
\[
\partial_{\mathrm{rel}}e=\{s_e^-,s_e^+\}
\]
be its two endpoints. We denote by \(S\) the finite set
\[
S:=\bigcup_{e\in\mathcal E}\partial_{\mathrm{rel}}e
\subset\overline\Omega .
\]
Thus \(S\) contains all endpoints of interior edges, including those which may
belong to \(\partial\Omega\).

\begin{assumption}[Quantitative nondegeneracy]\label{ass:5.1}
Let $g:\Om\to\R^2$ satisfy:
\begin{enumerate}
\item[(a)] for each $C\in\mathcal C$, there exists a quadratic polynomial
$P_C:\mathbb R^2\to\mathbb R^2$ such that $g=P_C$ on $C$;

\item[(b)] the family $\{P_C\}_{C\in\mathcal C}$ is $C^1$-compatible on
$\mathcal P$ in the sense of Definition 2.2; in particular
$g\in C^1(\Omega;\mathbb R^2)$;

\item[(c)] $g$ is a homeomorphism of $\Omega$ onto its image;

\item[(d)] there exists $\lambda>0$ such that
\[
\det Dg\ge \lambda \quad\text{on }\Omega;
\]

\item[(e)] there exists $m>0$ such that
\[
|g(x)-g(y)|\ge m|x-y|
\quad\text{for all }x,y\in\Omega.
\]
\end{enumerate}
\end{assumption}

\begin{remark}
Condition~(e) is a global bi-Lipschitz lower bound; it is equivalent to
$g^{-1}$ being globally Lipschitz on $g(\Om)$.  It is used only in the
injectivity step below.
\end{remark}

\begin{lemma}[Affine covariance of the local constructions]\label{lem}
Let $U,\widetilde U\subset \mathbb R^2$ be open sets, and let
\[
A(x)=Mx+a,\qquad B(y)=Ny+b,
\]
where $M,N\in GL^+(2)$ and $a,b\in\mathbb R^2$. Assume that
\[
\widetilde U=A(U).
\]
Let $g:U\to\mathbb R^2$, and define
\[
\widetilde g:=B\circ g\circ A^{-1}:\widetilde U\to\mathbb R^2.
\]

Then the following hold.

\begin{enumerate}
\item If $g\in C^1(U;\mathbb R^2)$, then $\widetilde g\in C^1(\widetilde U;\mathbb R^2)$, and
\[
D\widetilde g(\xi)=N\,Dg(A^{-1}\xi)\,M^{-1}
\qquad\text{for every }\xi\in \widetilde U.
\]

\item If $g$ is quadratic on each cell of a finite partition $\mathcal P$ of $U$, then
$\widetilde g$ is quadratic on each cell of the transformed partition
\[
A(\mathcal P):=\{A(C):C\in\mathcal P\}.
\]
Moreover, $C^1$-compatibility across interfaces is preserved by the transformation
$g\mapsto \widetilde g$.

\item If
\[
\det Dg\ge \lambda>0 \qquad \text{on }U,
\]
then
\[
\det D\widetilde g(\xi)
=
\det N\,\det Dg(A^{-1}\xi)\,\det M^{-1}
\ge \widetilde\lambda
\qquad\text{on }\widetilde U,
\]
where
\[
\widetilde\lambda:=\det N\,\det M^{-1}\,\lambda>0.
\]

\item Injectivity and homeomorphism are preserved: $g$ is injective (respectively, a
homeomorphism onto its image) if and only if $\widetilde g$ is injective
(respectively, a homeomorphism onto its image).

\item Let $\widetilde h:\widetilde U\to\mathbb R^2$, and define
\[
h:=B^{-1}\circ \widetilde h\circ A:U\to\mathbb R^2.
\]
Then there exists a constant $C=C(M,N)>0$ such that
\[
\|h-g\|_{W^{2,1}(U)}\le C\,\|\widetilde h-\widetilde g\|_{W^{2,1}(\widetilde U)},
\]
and
\[
\|Dh-Dg\|_{L^\infty(U)}
\le C\,\|D\widetilde h-D\widetilde g\|_{L^\infty(\widetilde U)}.
\]

\item If $\widetilde h=\widetilde g$ in a neighborhood of $\partial \widetilde U$, then
$h=g$ in a neighborhood of $\partial U$. If $\widetilde h\in C^\infty(W;\mathbb R^2)$
for some open set $W\subset \widetilde U$, then
\[
h\in C^\infty(A^{-1}(W);\mathbb R^2).
\]
\end{enumerate}
\end{lemma}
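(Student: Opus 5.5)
The plan is to verify items (1)--(6) one at a time. Each reduces to the chain rule for affine maps together with the change of variables formula $\xi=A(x)$, whose Jacobian $\det M$ is a nonzero constant. Throughout we use that $A$ and $B$ are affine bijections of $\mathbb R^2$ with constant derivatives $M$ and $N$, that $A^{-1}(\xi)=M^{-1}(\xi-a)$ has derivative $M^{-1}$, and that all derivatives of order $\ge 2$ of $A$, $A^{-1}$, $B$, $B^{-1}$ vanish.

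For (1), write $\widetilde g=B\circ g\circ A^{-1}$ as a composition of $C^1$ maps. The chain rule gives $D\widetilde g(\xi)=N\,Dg(A^{-1}\xi)\,M^{-1}$, and continuity of $D\widetilde g$ is inherited from that of $Dg$. Item (3) follows by taking determinants, using multiplicativity of $\det$ and the positivity of $\det N$ and $\det M^{-1}$ on $GL^+(2)$. For (2), note that a quadratic polynomial composed on either side with affine maps is again a polynomial of degree at most two, so $\widetilde g$ is quadratic on each $A(C)$. For compatibility, if $P_C=P_{C'}$ and $DP_C=DP_{C'}$ on a common edge $e$, then by (1) applied to the polynomial pieces the transformed pieces agree, with agreeing derivatives, on $A(e)$. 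One remark is needed here: $A(\mathcal P)$ consists of parallelograms rather than rectangles. The statement only asks for a partition and compatibility on the interfaces, however, and the edge-wise argument does not use rectangularity. In the applications $M$ will presumably be diagonal or a rotation composed with a scaling. Item (4) is immediate because $A^{-1}$ and $B$ are homeomorphisms of $\mathbb R^2$. Injectivity and the homeomorphism-onto-image property are stable under composition with such maps, and the reverse implication follows from $g=B^{-1}\circ\widetilde g\circ A$.

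Item (5) is the only quantitative step, and the bookkeeping of constants there is the main point to get right. Set $\widetilde w=\widetilde h-\widetilde g$. Then $h-g=B^{-1}\circ\widetilde h\circ A-B^{-1}\circ\widetilde g\circ A=N^{-1}\,\widetilde w\circ A$, because the translation part of $B^{-1}$ cancels. Differentiating gives
\[
D(h-g)(x)=N^{-1}D\widetilde w(Ax)\,M,\qquad D^2(h-g)(x)=N^{-1}D^2\widetilde w(Ax)[M\cdot,M\cdot],
\]
so pointwise $|D^k(h-g)(x)|\le |N^{-1}|\,|M|^k\,|D^k\widetilde w(Ax)|$ for $k=0,1,2$. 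The $L^\infty$ bound follows directly. For the $W^{2,1}$ bound, integrate over $U$ and change variables $\xi=Ax$, $dx=|\det M|^{-1}d\xi$, which yields the constant $C=|N^{-1}|\max(1,|M|^2)\,|\det M|^{-1}$. Depending on the operator norm convention one may need a dimensional factor, but it is harmless. If $\widetilde h$ is only in $W^{2,1}$, the identities hold weakly, since composition with affine bijections commutes with weak derivatives. This is again justified by change of variables in the test-function definition.

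For (6), suppose $\widetilde h=\widetilde g$ on $\widetilde V\cap\widetilde U$, where $\widetilde V$ is a neighborhood of $\partial\widetilde U$. Since $A$ is a homeomorphism of $\mathbb R^2$ with $A(\partial U)=\partial\widetilde U$, the set $A^{-1}(\widetilde V)$ is a neighborhood of $\partial U$, and on it $h=B^{-1}\circ\widetilde h\circ A=B^{-1}\circ\widetilde g\circ A=g$. Finally, smoothness on $A^{-1}(W)$ holds because $h|_{A^{-1}(W)}=B^{-1}\circ\widetilde h|_W\circ A$ is a composition of $C^\infty$ maps. I expect no genuine obstacle. The only delicate point is the remark in (2) that affine images of rectangles need not be rectangles.
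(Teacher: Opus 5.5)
Your proposal is correct and follows essentially the same route as the paper. The paper proves (1)--(4) and (6) by the chain rule and by composition with affine homeomorphisms. For (5) it uses exactly your identity $h-g=N^{-1}(\widetilde h-\widetilde g)\circ A$, your pointwise bounds on $D^k(h-g)$ involving $\|M\|^k$, and the change of variables $\xi=Ax$, arriving at the same constants $\|N^{-1}\|\,\|M\|^k/|\det M|$ for $k=0,1,2$.
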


\begin{proof}
Items (1), (2), (4), and (6) are immediate from the definition of
\[
\widetilde g=B\circ g\circ A^{-1}
\]
and from the fact that $A$ and $B$ are affine diffeomorphisms.

For (1), the chain rule gives
\[
D\widetilde g(\xi)=N\,Dg(A^{-1}\xi)\,M^{-1}.
\]

For (2), if $g|_C$ is a quadratic polynomial on a cell $C$, then
\[
\widetilde g|_{A(C)}=B\circ (g|_C)\circ A^{-1}
\]
is again a quadratic polynomial, since composition with affine maps preserves degree.
The preservation of $C^1$-compatibility across interfaces follows again from the chain rule.

For (3), taking determinants in the identity for $D\widetilde g$ yields
\[
\det D\widetilde g(\xi)
=
\det N\,\det Dg(A^{-1}\xi)\,\det M^{-1}.
\]
Since $M,N\in GL^+(2)$ and $\det Dg\ge \lambda$, we obtain
\[
\det D\widetilde g(\xi)\ge \det N\,\det M^{-1}\,\lambda = \widetilde\lambda>0.
\]

For (4), since
\[
\widetilde g=B\circ g\circ A^{-1},
\qquad
g=B^{-1}\circ \widetilde g\circ A,
\]
injectivity and the homeomorphism property are preserved by composition with
homeomorphisms.

We prove (5). Let
\[
\widetilde E:=\widetilde h-\widetilde g,
\qquad
E:=h-g.
\]
Since
\[
h=B^{-1}\circ \widetilde h\circ A,
\qquad
g=B^{-1}\circ \widetilde g\circ A,
\]
and
\[
B^{-1}(z)=N^{-1}(z-b),
\]
we obtain
\[
E(x)=h(x)-g(x)=N^{-1}\bigl(\widetilde h(Ax)-\widetilde g(Ax)\bigr)
= N^{-1}\widetilde E(Ax).
\]
Therefore
\[
DE(x)=N^{-1}\,D\widetilde E(Ax)\,M,
\]
and, a.e. in \(U\),
\[
D^2E(x)=N^{-1}\,(D^2\widetilde E)(Ax)[M,M].
\]

Hence
\[
|E(x)|\le \|N^{-1}\|\,|\widetilde E(Ax)|,
\]
\[
|DE(x)|\le \|N^{-1}\|\,\|M\|\,|D\widetilde E(Ax)|,
\]
and
\[
|D^2E(x)|\le \|N^{-1}\|\,\|M\|^2\,|D^2\widetilde E(Ax)|.
\]
Using the change of variables $\xi=Ax$, so that
\[
d\xi = |\det M|\,dx,
\]
we obtain
\[
\|E\|_{L^1(U)}
\le
\frac{\|N^{-1}\|}{|\det M|}\,
\|\widetilde E\|_{L^1(\widetilde U)},
\]
\[
\|DE\|_{L^1(U)}
\le
\frac{\|N^{-1}\|\,\|M\|}{|\det M|}\,
\|D\widetilde E\|_{L^1(\widetilde U)},
\]
and
\[
\|D^2E\|_{L^1(U)}
\le
\frac{\|N^{-1}\|\,\|M\|^2}{|\det M|}\,
\|D^2\widetilde E\|_{L^1(\widetilde U)}.
\]
Summing these inequalities gives
\[
\|h-g\|_{W^{2,1}(U)}
=
\|E\|_{W^{2,1}(U)}
\le C(M,N)\,\|\widetilde E\|_{W^{2,1}(\widetilde U)}
=
C(M,N)\,\|\widetilde h-\widetilde g\|_{W^{2,1}(\widetilde U)}.
\]
Similarly,
\[
\|Dh-Dg\|_{L^\infty(U)}
=
\|DE\|_{L^\infty(U)}
\le \|N^{-1}\|\,\|M\|\,
\|D\widetilde h-D\widetilde g\|_{L^\infty(\widetilde U)}.
\]

Finally, (6) follows from the identities
\[
h=B^{-1}\circ \widetilde h\circ A,
\qquad
g=B^{-1}\circ \widetilde g\circ A,
\]
because affine diffeomorphisms preserve neighborhoods of the boundary and smoothness on
open subsets.

This completes the proof.
\end{proof}

\subsection{Quasiconvexity and the main global result}
\begin{lemma}[Quasiconvexity of polygonal domains]\label{lem:quasiconvex-domain}
Let $\Omega \subset \mathbb R^2$ be a bounded connected open polygonal domain. 
Then there exists a constant $Q_\Omega \ge 1$ such that for every
$x,y \in \Omega$ there exists a rectifiable curve $\gamma \subset \Omega$
joining $x$ to $y$ and satisfying
\[
\ell(\gamma) \le Q_\Omega |x-y|.
\]
Consequently, for every $u \in C^1(\Omega;\mathbb R^2)$ and every
$x,y \in \Omega$,

\[
|u(x)-u(y)| \le Q_\Omega \|Du\|_{L^\infty(\Omega)} |x-y|.
\]

\end{lemma}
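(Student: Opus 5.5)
The plan is to prove the geometric statement first and then derive the Lipschitz estimate from it by integrating along the curve.

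\emph{Step 1: quasiconvexity.} Since $\Omega$ is a bounded connected open polygon, its boundary is a finite union of segments. I will use a compactness argument combined with a local analysis near each boundary point.

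First, the local picture. Every $p\in\overline\Omega$ has a neighborhood $U_p$, say a small disc $B_{r_p}(p)$, such that $\Omega\cap U_p$ is one of the following:
\begin{itemize}
\item the whole disc, if $p$ is interior;
\item a half-disc, if $p$ lies on the relative interior of an edge;
\item a circular sector of opening angle $\alpha_p\in(0,2\pi)$, if $p$ is a vertex.
\end{itemize}
If several sectors meet at a vertex (a slit or pinched configuration), I first take the polygon to be Lipschitz, so that a single sector occurs. Any two points of a sector of opening $\alpha$ can be joined inside it by a path through the apex region. Concretely, use the straight segment if it stays inside the sector, and otherwise the two-segment path via the apex. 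This gives length at most $c(\alpha)|x-y|$, with $c(\alpha)=1$ for $\alpha\le\pi$ and $c(\alpha)\le 1/\sin((2\pi-\alpha)/2)$ otherwise; the latter follows from the law of sines in the triangle $x,p,y$.

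Next, the global step. Cover $\overline\Omega$ by finitely many half-size discs $B_{r_p/2}(p)$ and let $\rho>0$ be a Lebesgue number of this cover. Then:
\begin{itemize}
\item If $|x-y|<\rho$, both points lie in a common local chart, and the local bound applies.
\item If $|x-y|\ge\rho$, use that $\Omega$ is connected and open, hence path connected. Compactness, obtained by covering $\overline\Omega$ with the charts and chaining them, gives a uniform bound $L<\infty$ on the length of a connecting path between any two points. Then $\ell(\gamma)\le L\le (L/\rho)|x-y|$.
\end{itemize}
Taking $Q_\Omega=\max\{1,\max_p c(\alpha_p),L/\rho\}$ completes Step 1.

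\emph{The main obstacle} is making the uniform length bound $L$ rigorous. My approach is to fix a finite set of "hub" points, one in each chart, joined to each other by polygonal paths inside $\Omega$; this is possible by connectivity. Any point is then joined to its chart's hub with length at most $Q_{\mathrm{loc}}\cdot 2r_p$.

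\emph{Step 2: the Lipschitz estimate.} Given $u\in C^1(\Omega;\mathbb R^2)$ and $x,y$, take $\gamma$ from Step 1. Parametrize it by arclength, or note it is piecewise affine in the construction. Then
\[
u(y)-u(x)=\int_0^{\ell}Du(\gamma(t))\gamma'(t)\,dt ,
\]
and this is legitimate since $\gamma\subset\Omega$ and $u$ is $C^1$ there. Bounding the integrand by $\|Du\|_{L^\infty(\Omega)}|\gamma'(t)|$ gives
\[
|u(x)-u(y)|\le\|Du\|_{L^\infty(\Omega)}\,\ell(\gamma)\le Q_\Omega\|Du\|_{L^\infty(\Omega)}|x-y|.
\]
If $\|Du\|_{L^\infty(\Omega)}=\infty$, the estimate is trivial.
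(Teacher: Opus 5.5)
Your Step 2 coincides with the paper's argument: integrate $Du$ along a Lipschitz curve and bound by $\|Du\|_{L^\infty(\Omega)}\,\ell(\gamma)$. For Step 1 the paper gives no proof; it only says the existence of $Q_\Omega$ is ``standard''. Your local-to-global argument therefore goes beyond the paper, and it is sound. It combines disc, half-disc and sector charts with the law-of-sines constant $1/\sin((2\pi-\alpha)/2)$, a Lebesgue number $\rho$ for the case $|x-y|<\rho$, and hub-chaining to get a uniform length bound $L$ when $|x-y|\ge\rho$.

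What your approach gains is that it exposes a hypothesis the paper leaves implicit. Excluding slit and pinched vertices is not a convenience but a necessity. Take $\Omega=(-1,1)^2\setminus(\{0\}\times[0,1))$. The points $(\pm t,\tfrac12)$ are at distance $2t$, yet every path joining them in $\Omega$ has length at least $1$. Now let $\psi$ be smooth with $\psi\equiv 0$ on $(-\infty,\tfrac14]$ and $\psi\equiv 1$ on $[\tfrac12,\infty)$. Then $u=(\psi(x_2)\,\mathrm{sgn}(x_1),0)$ is $C^1$ on $\Omega$ with bounded gradient, but it violates the Lipschitz estimate. So state the Lipschitz condition (no slits, no pinches) as part of what ``polygonal domain'' means, not as a preliminary reduction (``I first take the polygon to be Lipschitz'').

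One local step needs a small repair. The two-segment path ``via the apex'' passes through $p\in\partial\Omega$, so it is not a curve in $\Omega$. Replace $p$ by a point $q$ on the bisector ray with $0<|q-p|\le\min\{|x-y|,r_p/2\}$. The reflex sector is the union of two convex half-sectors of opening $\alpha/2<\pi$ glued along the bisector. If $[x,y]$ leaves the sector, then $x$ and $y$ lie in different halves. Hence $[x,q]\cup[q,y]\subset\Omega$, and its length is at most $|x-p|+|p-y|+2|q-p|\le(c(\alpha)+2)|x-y|$. With $Q_{\mathrm{loc}}$ enlarged accordingly, the rest of your argument goes through unchanged.
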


\begin{proof}
The existence of such a constant $Q_\Omega$ is standard for bounded connected polygonal domains. Let $\gamma:[0,1]\to\Omega$ be a Lipschitz parametrization of a curve joining $x$ to $y$ with $\ell(\gamma)\le Q_\Omega |x-y|$. Since $u\in C^1(\Omega;\mathbb R^2)$, the map $u\circ\gamma$ is absolutely continuous and
\[
\frac{d}{dt}(u\circ\gamma)(t)=Du(\gamma(t))\,\gamma'(t)
\quad\text{for a.e. } t\in(0,1).
\]
Therefore,
\[
u(y)-u(x)=\int_0^1 Du(\gamma(t))\,\gamma'(t)\,dt.
\]
Hence
\[
|u(x)-u(y)|
\le \|Du\|_{L^\infty(\Omega)}\,\ell(\gamma)
\le Q_\Omega \|Du\|_{L^\infty(\Omega)}\,|x-y|.
\]
%
\end{proof}
\begin{theorem}[Localized global smoothing]\label{thm:global_smoothing}
Under Assumption 5.1, for every \(\delta>0\)  there exist pairwise disjoint open sets
\(W_{s,\delta}\subset\mathbb R^2\), \(s\in S\),
such that
\[
s\in W_{s,\delta},\qquad
\operatorname{diam}(W_{s,\delta})<\delta
\quad\text{for every }s\in S,
\]
and a map
\[
\widetilde g_\delta \in C^1(\Omega;\mathbb R^2)
\]
such that:
\begin{enumerate}

\item
\[
\|\widetilde g_\delta-g\|_{W^{2,1}(\Omega)}<\delta;
\]
\item
\[
\|D\widetilde g_\delta-Dg\|_{L^\infty(\Omega)}<\delta;
\]
\item
\[
\det D\widetilde g_\delta \ge \frac{\lambda}{2}
\qquad\text{on }\Omega;
\]
\item for every \(x,y\in\Omega\),
\[
|\widetilde g_\delta(x)-\widetilde g_\delta(y)|
\ge (m-Q_\Omega\delta)|x-y|;
\]
in particular, if \(\delta<m/(2Q_\Omega)\), then \(\widetilde g_\delta\) is injective;
\item
\[
\tilde g_\delta\in C^\infty_{\mathrm{loc}}
\left(
\Omega\setminus\bigcup_{s\in S}\overline{W}_{s,\delta};
\mathbb R^2
\right).
\]

\end{enumerate}
\end{theorem}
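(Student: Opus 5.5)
We glue together finitely many copies of the tangentially localized edge smoothing, Proposition~\ref{prop:localized_edge_smoothing}, one per interior edge. The edge cut-offs are placed so that they vanish near the endpoints of the edges. No four-cell vertex smoothing is needed here: the points of $S$ are simply left inside the exceptional neighborhoods $W_{s,\delta}$, where $g$ remains $C^1$ but only piecewise quadratic.

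\emph{Step 1: choice of the neighborhoods.} Since $S$ is finite, pick $r>0$ with $r<\delta/2$ and $2r$ smaller than the minimal distance between distinct points of $S$. Set $W_{s,\delta}:=B_r(s)$. These sets are pairwise disjoint, contain $s$, and have diameter less than $\delta$.

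\emph{Step 2: localization near each edge.} For each $e\in\mathcal E$ with closed segment $\overline e$, let $e_r:=\overline e\setminus B_{r/2}(\partial_{\rm rel}e)$. Two distinct interior edges of a conforming rectangular partition can meet only at points of $S$. Hence for $t>0$ small the rectangles
\[
U_e:=\{x:\operatorname{dist}(x,e_r)<t\}
\]
are pairwise disjoint, are contained in $\Omega$, and meet only the two cells adjacent to $e$. Choose an orientation-preserving affine map $A$ sending $U_e$ onto a thin sub-box of $Q=(-2,2)^2$ with $e$ onto $\{x_1=0\}$, and the part of $\overline e$ outside $B_r(\partial_{\rm rel}e)$ into $\{|x_2|\le 1\}$. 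By Lemma~\ref{lem}, $\widetilde g=g\circ A^{-1}$ (with $B=\mathrm{id}$) satisfies the hypotheses of Proposition~\ref{prop:localized_edge_smoothing}. The determinant bound becomes $\widetilde\lambda$, and the conclusion $\det\ge\lambda/2$ transports back because the determinant factor $\det M^{-1}$ cancels exactly.

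Some care is needed in the $x_2$-direction. The transformed domain is really $(-t',t')\times(-2,2)$ rather than the full $Q$, but the proof of Proposition~\ref{prop:localized_edge_smoothing} is purely local. Since $\theta$ vanishes for $|x_2|\ge 3/2$, the correction is supported strictly inside $U_e$ once $\varepsilon<t'$. Pulling back gives a map $h_e$ with the following properties:
\begin{itemize}[label={}]
\item $h_e=g$ outside a compact subset of $U_e$;
\item $h_e$ is $C^\infty$ on $U_e\setminus \overline{B_r(\partial_{\rm rel}e)}$;
\item $\|h_e-g\|_{W^{2,1}}$ and $\|Dh_e-Dg\|_{L^\infty}$ tend to $0$ as $\varepsilon\to0$ (Lemma~\ref{lem}(5));
\item $\det Dh_e\ge\lambda/2$.
\end{itemize}

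\emph{Step 3: gluing and properties (1)--(3), (5).} Define $\widetilde g_\delta:=g+\sum_e(h_e-g)$. The supports are disjoint, so $\widetilde g_\delta$ coincides with $h_e$ on $U_e$ and with $g$ elsewhere. It is $C^1$ because each $h_e-g$ is $C^1$ with compact support in $U_e$.

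Taking $\varepsilon$ small for each of the finitely many edges, we get $\|\widetilde g_\delta-g\|_{W^{2,1}}<\delta$ and $\|D\widetilde g_\delta-Dg\|_{L^\infty}<\min(\delta,\cdot)$, which give (1) and (2). Property (3) follows pointwise, from Step 2 on each $U_e$ and from Assumption~\ref{ass:5.1}(d) elsewhere.

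For (5), take a point of $\Omega$ outside $\bigcup_s\overline{W}_{s,\delta}$. Either it lies in the interior of a cell, where $\widetilde g_\delta=g$ is quadratic, or it lies on an edge away from the endpoints. In the latter case it belongs to the region where $\theta\equiv1$, where $\widetilde g_\delta=H_\varepsilon$ is smooth. If it lies in the transition region of $\theta$ but off the edge, then $\widetilde g_\delta=g+\theta\Psi_\varepsilon$. Here $\Psi_\varepsilon=(\eta_\varepsilon(x_1)-\mathbf 1_{x_1>0})x_1^2a$ is smooth off $\{x_1=0\}$.

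The real obstacle is precisely this $\theta$-transition zone: it must be contained in $B_r(\partial_{\rm rel}e)$. That is why the affine normalization in Step 2 places the part of $e$ outside $B_r$ inside $\{|x_2|\le1\}$ and the zone $1<|x_2|<3/2$ inside $B_r(s^\pm_e)$. This placement is possible since the edge has positive length and $r$ is small.

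\emph{Step 4: injectivity (4).} Apply Lemma~\ref{lem:quasiconvex-domain} to $u=\widetilde g_\delta-g\in C^1(\Omega)$. This gives $|u(x)-u(y)|\le Q_\Omega\delta|x-y|$, and combining with Assumption~\ref{ass:5.1}(e) via the triangle inequality yields $|\widetilde g_\delta(x)-\widetilde g_\delta(y)|\ge(m-Q_\Omega\delta)|x-y|$. This bound is positive whenever $\delta<m/(2Q_\Omega)$.
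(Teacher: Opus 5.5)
\textbf{The placement step fails, and it carries all of property (5).}

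Dropping the four-cell vertex smoothing is a legitimate simplification. In the paper that step only changes $g$ inside disks that end up inside $W_{s,\delta}$, and the resulting map is not even smooth in the annulus, so it contributes nothing to (1)--(5).

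The gap is in the step you yourself call the real obstacle. Your justification for it (``possible since the edge has positive length and $r$ is small'') is false. An affine map restricts to an affine map of the line $\{x_1=0\}$, so it preserves length ratios along $e$. Whatever box you use, $\theta\equiv1$ exactly on the middle half $\{|x_2|\le 1\}$ of its tangential extent, and each transition zone $\{1<|x_2|<3/2\}$ is one eighth of that extent.

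With your $U_e$, the tangential extent has length $|e|-r+2t$. Its middle half must contain $\overline e\setminus B_r(\partial_{\rm rel}e)$, which has length $|e|-2r$, and this forces $|e|\le 3r+2t$. Changing the box does not help: a transition zone (one eighth of the box) must fit inside $B_r(s_e^\pm)$, while the middle half must have length at least $|e|-2r$. Together these bound $|e|$ by a fixed multiple of $r$, which is incompatible with $r<\delta/2$ for small $\delta$.

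Concretely, consider points of $e$ whose distance from an endpoint lies between $r$ and roughly $|e|/4$. There you have $0\le\theta<1$, so nearby $\widetilde g_\delta=(1-\theta)g+\theta H_\varepsilon$. Hence $\partial_{11}\widetilde g_\delta$ jumps by $2(1-\theta)a\neq 0$ across $e$ whenever $a\neq0$, and (5) fails at those points. Note that $\widetilde g_\delta:=g$ already satisfies (1)--(4), so (5) is the entire content of the theorem. The paper's own conditions on $A_e$ have the same defect: $A_e(\{0\}\times(-2,2))\subset e$ together with $A_e(\{0\}\times[-1,1])\supset e_\delta$ forces $|e|\le 4\delta$. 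So this step cannot be borrowed from there.

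\textbf{The missing idea: adapt the tangential cut-off to the edge.} Do not inherit $\theta$ from the fixed model square.
\begin{enumerate}
\item Work in rigid coordinates in which $e=\{0\}\times(0,\ell)$, on a genuine rectangle $(-t,t)\times(0,\ell)$. A metric neighbourhood of $e_r$ is a stadium and cannot be mapped affinely onto a box.
\item Set $\widetilde g_\delta=g+\theta_e(x_2)\bigl(\eta_\varepsilon(x_1)-\mathbf 1_{\{x_1>0\}}\bigr)x_1^2a$ there. Take $\theta_e\in C^\infty_c((r/2,\ell-r/2))$ with $\theta_e\equiv1$ on $[3r/4,\ell-3r/4]$, and take $\varepsilon<t$.
\item The proof of Proposition~\ref{prop:localized_edge_smoothing} uses only $\varepsilon$-independent bounds on $\theta,\theta',\theta''$. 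So all estimates and the determinant bound go through, with constants depending on $r$, which is fixed before letting $\varepsilon\to0$.
\item The only points of $e$ near which $\widetilde g_\delta$ fails to be smooth now lie within distance $3r/4$ of the endpoints, hence inside $W_{s,\delta}$.
\end{enumerate}
With this change the rest of your argument is fine: the gluing, (1)--(4), and the off-edge smoothness of $g+\theta\Psi_\varepsilon$. That last point also covers the points in the strips $|x_1|<\varepsilon$, which you misdescribe as having $\widetilde g_\delta=g$.
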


\begin{proof}
Fix \(\delta>0\). Since the set \(V\) of interior vertices is finite, we can choose pairwise disjoint closed disks
\[
B(v,r_{v,\delta})\subset \Omega,
\qquad v\in V,
\]
so small that
\[
r_{v,\delta}<\delta
\qquad\text{for every }v\in V,
\]
and such that each disk \(B(v,r_{v,\delta})\) meets only the four rectangles incident to \(v\).
We first smooth near the interior vertices. Fix \(v\in V\). Choose an orientation-preserving affine map
\[
A_v(x)=M_vx+v
\]
sending the origin to \(v\), such that \(A_v(B)\subset B(v,r_{v,\delta})\), and such that
\(A_v(Q_i)\subset C_i\) for \(i=1,\dots,4\), where \(C_1,\dots,C_4\) are the four cells of the
partition incident to \(v\), numbered according to the cyclic order. Define
\[
\widehat g_v:=g\circ A_v:B\to\mathbb R^2.
\]
Then \(\widehat g_v\in C^1(B;\mathbb R^2)\), \(\widehat g_v\) is quadratic on each quadrant,
and
\[
\det D\widehat g_v(x)=\det Dg(A_vx)\,\det M_v \ge \lambda\,\det M_v =: \lambda_v>0
\qquad\text{on }B.
\]
Hence Proposition~\ref{prop:vertex_smoothing} applies to \(\widehat g_v\). Therefore, for every
sufficiently small \(\varepsilon_v>0\), there exists a map
\[
\widehat g_{v,\varepsilon_v}\in C^1(B;\mathbb R^2)
\]
such that
\[
\widehat g_{v,\varepsilon_v}=\widehat g_v
\qquad\text{on } B\setminus B_{\varepsilon_v}(0),
\]
and
\[
\|\widehat g_{v,\varepsilon_v}-\widehat g_v\|_{W^{2,1}(B)}
+
\|D\widehat g_{v,\varepsilon_v}-D\widehat g_v\|_{L^\infty(B)}
\to 0
\qquad\text{as }\varepsilon_v\downarrow 0.
\]

Now define
\[
g_{v,\varepsilon_v}:=\widehat g_{v,\varepsilon_v}\circ A_v^{-1}
\]
on the neighborhood \(A_v(B)\) of \(v\). By Lemma~\ref{lem}, \(g_{v,\varepsilon_v}\in C^1(A_v(B);\mathbb R^2)\),
and
\[
g_{v,\varepsilon_v}=g
\qquad\text{on } A_v(B\setminus B_{\varepsilon_v}(0)).
\]
Equivalently, the support of \(g_{v,\varepsilon_v}-g\) is contained in
\[
A_v(B_{\varepsilon_v}(0)).
\]
Since \(A_v(B_{\varepsilon_v}(0))\to\{v\}\) as \(\varepsilon_v\downarrow 0\), by choosing \(\varepsilon_v\)
sufficiently small we may ensure that
\[
A_v(B_{\varepsilon_v}(0))\subset B(v,r_{v,\delta}).
\]
Thus the local modification near \(v\) is supported in \(B(v,r_{v,\delta})\).

Since \(V\) is finite and the disks \(B(v,r_{v,\delta})\) are pairwise disjoint, patching these
local modifications together we obtain a map
\[
g^{(V)}\in C^1(\Omega;\mathbb R^2)
\]
such that
\[
g^{(V)}=g
\qquad\text{on } \Omega\setminus \bigcup_{v\in V} B(v,r_{v,\delta}).
\]
Moreover, since \(V\) is finite, choosing all the local parameters sufficiently small, we may ensure that
\[
\|g^{(V)}-g\|_{W^{2,1}(\Omega)}<\frac{\delta}{2},
\qquad
\|Dg^{(V)}-Dg\|_{L^\infty(\Omega)}<\frac{\delta}{2},
\]
and
\[
\det Dg^{(V)}\ge \frac{3\lambda}{4}
\qquad\text{on }\Omega.
\]
We now choose the final exceptional neighborhoods. Since $S\subset\overline\Omega$
is finite, we may choose pairwise disjoint open sets
\[
W_{s,\delta}\subset \mathbb R^2,\qquad s\in S,
\]
such that
\[
s\in W_{s,\delta},\qquad
\operatorname{diam}(W_{s,\delta})<\delta
\quad\text{for every }s\in S.
\]
Notice that the sets \(W_{s,\delta}\) are chosen as open subsets of
\(\mathbb R^2\), not necessarily as subsets of \(\Omega\), because some points
of \(S\) may lie on \(\partial\Omega\).

Moreover, we choose them so that:
\begin{enumerate}
\item if \(s\in\mathcal V\), then
\[
B(s,r_{s,\delta})\subset W_{s,\delta};
\]

\item for every interior edge \(e\in\mathcal E\), with endpoints
\[
\partial_{\mathrm{rel}}e=\{s_e^-,s_e^+\},
\]
the set
\[
e_\delta
:=
e\setminus
\left(
W_{s_e^-,\delta}\cup W_{s_e^+,\delta}
\right)
\]
is either empty or a compact line segment contained in the relative interior of \(e\).
\end{enumerate}

For each interior edge \(e\in\mathcal E\) such that
\(e_\delta\neq\varnothing\), choose an open parallelogram neighbourhood
\[
T_e=A_e(Q)\subset\Omega,\qquad Q=(-2,2)^2,
\]
where
\[
A_e(x)=M_ex+a_e
\]
is an orientation-preserving affine map, so small that:
\begin{enumerate}
\item the sets \(T_e\) are pairwise disjoint;

\item \(T_e\cap B(v,r_{v,\delta})=\varnothing\) for every \(v\in\mathcal V\);

\item \(A_e(\{x_1=0\})\subset e\), and \(A_e(Q^-)\), \(A_e(Q^+)\)
are contained in the two cells adjacent to \(e\);

\item
\[
A_e(\{0\}\times[-1,1])\supset e_\delta;
\]

\item
\[
T_e\setminus
\left(
W_{s_e^-,\delta}\cup W_{s_e^+,\delta}
\right)
\subset
A_e\bigl(Q\cap\{|x_2|<1\}\bigr).
\]
\end{enumerate}

Because the vertex smoothing is supported inside the disks \(B(v,r_{v,\delta})\), we have
\[
g^{(V)}=g
\qquad\text{on }T_e
\]
for every such edge \(e\).

Fix one of these edges \(e\), and define
\[
\widehat g_e:=g\circ A_e:Q\to\mathbb R^2.
\]
Then \(\widehat g_e\) is a two-cell piecewise quadratic map, \(C^1\)-compatible across the interface \(\{x_1=0\}\), and
\[
\det D\widehat g_e(x)
=
\det Dg(A_e x)\,\det M_e
\ge
\lambda\,\det M_e
=: \lambda_e>0
\qquad\text{on }Q.
\]
Hence Proposition \ref{prop:localized_edge_smoothing} applies to \(\widehat g_e\). Therefore, for every sufficiently small parameter \(\varepsilon_e>0\), there exists a map
\[
\widehat G_e\in C^1(Q;\mathbb R^2)
\]
such that:
\begin{enumerate}
\item \(\widehat G_e=\widehat g_e\) in a neighborhood of \(\partial Q\);
\item \(\widehat G_e\in C^\infty\bigl(Q\cap\{|x_2|<1\}\bigr)\);
\item
\[
\|\widehat G_e-\widehat g_e\|_{W^{2,1}(Q)}
+
\|D\widehat G_e-D\widehat g_e\|_{L^\infty(Q)}
\]
can be made arbitrarily small.
\end{enumerate}

Now define
\[
G_e:=\widehat G_e\circ A_e^{-1}:T_e\to\mathbb R^2.
\]
By Lemma \ref{lem}, \(G_e\in C^1(T_e;\mathbb R^2)\), \(G_e=g\) in a neighborhood of \(\partial T_e\), and
\[
G_e\in C^\infty\Bigl(A_e\bigl(Q\cap\{|x_2|<1\}\bigr)\Bigr).
\]
Moreover,
\[
\|G_e-g\|_{W^{2,1}(T_e)}
+
\|DG_e-Dg\|_{L^\infty(T_e)}
\]
can be made arbitrarily small by choosing \(\varepsilon_e\) sufficiently small.
Since
\[
\det Dg\ge \lambda>0
\qquad\text{on }T_e,
\]
the continuity of the determinant and the uniform convergence \(DG_e\to Dg\) on \(T_e\) imply that, after possibly decreasing \(\varepsilon_e\),
\[
\det DG_e\ge \frac{3\lambda}{4}
\qquad\text{on }T_e.
\]
Since the family of relevant edges is finite, choosing all parameters \(\varepsilon_e\) sufficiently small we may also require that
\[
\sum_{e\in E}\|G_e-g\|_{W^{2,1}(T_e)}<\frac{\delta}{2},
\qquad
\max_{e\in E}\|DG_e-Dg\|_{L^\infty(T_e)}<\frac{\delta}{2}.
\]

We now define the global map
\[
\widetilde g_\delta(x):=
\begin{cases}
G_e(x), & x\in T_e \text{ for some } e\in E,\\[4pt]
g^{(V)}(x), & x\in \Omega\setminus \displaystyle\bigcup_{e\in E}T_e.
\end{cases}
\]
This is well defined because the tubes \(T_e\) are pairwise disjoint. Moreover, by construction each \(G_e\) coincides with \(g\), hence with \(g^{(V)}\), in a neighborhood of \(\partial T_e\), since \(T_e\cap \bigcup_{v\in V}B(v,r_{v,\delta})=\varnothing\). Therefore
\[
\widetilde g_\delta\in C^1(\Omega;\mathbb R^2).
\]

We next prove (1) and (2). Since \(\widetilde g_\delta=g^{(V)}\) outside the union of the tubes and \(\widetilde g_\delta=G_e\) on each \(T_e\), we have
\[
\|\widetilde g_\delta-g\|_{W^{2,1}(\Omega)}
\le
\|g^{(V)}-g\|_{W^{2,1}(\Omega)}
+
\sum_{e\in E}\|G_e-g\|_{W^{2,1}(T_e)}
<\delta.
\]
Similarly,
\[
\|D\widetilde g_\delta-Dg\|_{L^\infty(\Omega)}
\le
\max\!\left\{
\|Dg^{(V)}-Dg\|_{L^\infty(\Omega)},
\max_{e\in E}\|DG_e-Dg\|_{L^\infty(T_e)}
\right\}
<\delta.
\]
Thus (1) and (2) hold.

We now prove (3). On \(\Omega\setminus \bigcup_{e\in E}T_e\) we have
\[
\widetilde g_\delta=g^{(V)},
\]
hence
\[
\det D\widetilde g_\delta=\det Dg^{(V)}\ge \frac{3\lambda}{4}.
\]
On each tube \(T_e\), we have
\[
\widetilde g_\delta=G_e,
\]
hence
\[
\det D\widetilde g_\delta=\det DG_e\ge \frac{3\lambda}{4}.
\]
Therefore
\[
\det D\widetilde g_\delta\ge \frac{3\lambda}{4}\ge \frac{\lambda}{2}
\qquad\text{on }\Omega,
\]
which proves (3).

We next establish (4). Set
\[
\phi_\delta:=\widetilde g_\delta-g.
\]
Then by (2),
\[
\|D\phi_\delta\|_{L^\infty(\Omega)}
=
\|D\widetilde g_\delta-Dg\|_{L^\infty(\Omega)}
<\delta.
\]
By Lemma 5.4, for every \(x,y\in\Omega\),
\[
|\phi_\delta(x)-\phi_\delta(y)|
\le Q_\Omega \|D\phi_\delta\|_{L^\infty(\Omega)}\,|x-y|
< Q_\Omega\delta\,|x-y|.
\]
Using Assumption 5.1(e),
\[
|g(x)-g(y)|\ge m|x-y|
\qquad\text{for all }x,y\in\Omega,
\]
we obtain
\[
\begin{aligned}
|\widetilde g_\delta(x)-\widetilde g_\delta(y)|
&\ge |g(x)-g(y)|-|\phi_\delta(x)-\phi_\delta(y)|\\
&\ge m|x-y|-Q_\Omega\delta\,|x-y|\\
&=(m-Q_\Omega\delta)|x-y|.
\end{aligned}
\]
This proves (4). In particular, if \(\delta<m/(2Q_\Omega)\), then
\[
|\widetilde g_\delta(x)-\widetilde g_\delta(y)|
\ge \frac{m}{2}|x-y|
\qquad\text{for all }x,y\in\Omega,
\]
so \(\widetilde g_\delta\) is injective.

It remains to prove (5). Let
$$
x\in \Omega\setminus\bigcup_{s\in S} \overline{W}_{s,\delta}
$$
There are two possibilities.

If \(x\in T_e\) for some interior edge \(e\), then by property (5) in the choice of \(T_e\),
\[
x\in T_e\setminus \bigl(W_{s_-,\delta}\cup W_{s_+,\delta}\bigr)
\subset A_e\bigl(Q\cap\{|x_2|<1\}\bigr).
\]
On this set one has
\[
\widetilde g_\delta = G_e \in C^\infty,
\]
by the construction of \(G_e\). Hence \(\widetilde g_\delta\) is smooth in a neighborhood of \(x\).

If
\[
x \notin \bigcup_{e\in E} T_e,
\]
then \(x\) lies outside all edge neighborhoods and outside all exceptional neighborhoods.
In particular, \(x\) does not belong to any interior edge: indeed, every point of an interior edge outside the exceptional neighborhoods is contained in the corresponding set \(T_e\) by the choice of the maps \(A_e\) and the inclusion
\[
A_e\bigl(\{0\}\times[-1,1]\bigr)\supset e_\delta.
\]
Hence \(x\) belongs to the interior of some cell of the partition, and in a neighborhood of \(x\) one has
\[
\widetilde g_\delta = g,
\]
which is quadratic there. Therefore \(\widetilde g_\delta\) is smooth in a neighborhood of \(x\).

Since \(x\) was arbitrary, we have proved that for every
\[
x\in \Omega\setminus \bigcup_{s\in S}\overline{W}_{s,\delta}
\]
there exists an open neighbourhood \(U_x\Subset \Omega\) such that
\[
\tilde g_\delta|_{U_x}\in C^\infty(U_x;\mathbb R^2).
\]
Equivalently,
\[
\tilde g_\delta \in C^\infty_{\mathrm{loc}}
\left(
\Omega \setminus \bigcup_{s\in S}\overline{W}_{s,\delta};
\mathbb R^2
\right).
\]
This proves (5).
\end{proof}
\begin{corollary}\label{cor:global_smoothing_sequence}
Under Assumption 5.1, there exist a sequence of maps
\[
\widetilde g_k \in C^1(\Omega;\mathbb R^2),
\qquad k\in\mathbb N,
\]
and, for every \(k\in\mathbb N\) and every \(s\in\mathcal S\), pairwise disjoint open sets
\[
W_{k,s}\subset \mathbb R^2,
\qquad s\in\mathcal S,
\]
such that:
\begin{enumerate}
\item
\[
s\in W_{k,s},\qquad
\operatorname{diam}(\overline{W}_{k,s})<\frac1k ;
\]
\item

\[
\tilde g_k\in C^\infty_{\mathrm{loc}}
\left(
\Omega\setminus\bigcup_{s\in S}\overline{W}_{k,s};
\mathbb R^2
\right).
\]
\item
\[
\widetilde g_k \to g
\qquad\text{in }W^{2,1}(\Omega;\mathbb R^2);
\]
\item
\[
D\widetilde g_k \to Dg
\qquad\text{uniformly on }\Omega;
\]
\item
\[
\det D\widetilde g_k \ge \frac{\lambda}{2}
\qquad\text{on }\Omega
\quad\text{for every }k\in\mathbb N;
\]
\item each \(\widetilde g_k\) is injective on \(\Omega\).
\end{enumerate}
\end{corollary}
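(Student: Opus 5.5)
The corollary follows by applying Theorem~\ref{thm:global_smoothing} along a sequence of parameters \(\delta_k\downarrow 0\), chosen small enough to give the diameter bound and injectivity simultaneously. With \(Q_\Omega\ge1\) from Lemma~\ref{lem:quasiconvex-domain} and \(m\) from Assumption~\ref{ass:5.1}(e), set
\[
\delta_k:=\min\Bigl\{\frac{1}{2k},\ \frac{m}{4Q_\Omega}\Bigr\},\qquad k\in\mathbb N .
\]
Then \(\delta_k\downarrow0\), \(\delta_k<1/k\), and \(\delta_k<m/(2Q_\Omega)\) for every \(k\). For each \(k\), Theorem~\ref{thm:global_smoothing} with \(\delta=\delta_k\) gives pairwise disjoint open sets \(W_{s,\delta_k}\ni s\) and a map \(\widetilde g_{\delta_k}\in C^1(\Omega;\mathbb R^2)\). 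We define \(W_{k,s}:=W_{s,\delta_k}\) and \(\widetilde g_k:=\widetilde g_{\delta_k}\), where the index set \(\mathcal S\) in the corollary is the finite set \(S\) of endpoints of interior edges.

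The items are checked in order.

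\begin{enumerate}
\item The closure of a set has the same diameter as the set, so \(\operatorname{diam}(\overline W_{k,s})=\operatorname{diam}(W_{s,\delta_k})<\delta_k<1/k\). The factor \(1/2\) in \(\delta_k\) makes this inequality strict without further care.
\item This is exactly item (5) of the theorem.
\item Item (1) of the theorem gives \(\|\widetilde g_k-g\|_{W^{2,1}(\Omega)}<\delta_k\to0\).
\item Item (2) gives \(\|D\widetilde g_k-Dg\|_{L^\infty(\Omega)}<\delta_k\to0\). Since \(D\widetilde g_k\) and \(Dg\) are continuous on \(\Omega\), the \(L^\infty\) bound is a pointwise supremum bound, which is uniform convergence.
\item Item (3) gives \(\det D\widetilde g_k\ge\lambda/2\) for every \(k\), independently of \(k\).
\item Item (4) gives
\[
|\widetilde g_k(x)-\widetilde g_k(y)|\ge (m-Q_\Omega\delta_k)|x-y|\ge \tfrac{3m}{4}|x-y|
\]
for all \(x,y\in\Omega\), so \(\widetilde g_k\) is injective.
\end{enumerate}

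No step here is a real obstacle; the corollary is a bookkeeping consequence of the theorem. The only point needing care is that a single \(\delta_k\) must control both the diameter bound and the injectivity threshold \(m/(2Q_\Omega)\), which the minimum in the definition of \(\delta_k\) ensures.
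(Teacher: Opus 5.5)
Your proposal is correct and follows the paper's proof essentially verbatim: apply Theorem~\ref{thm:global_smoothing} with $\delta_k=\min\{\cdot,\,m/(4Q_\Omega)\}$ and read off each item, with the minimum ensuring both $\delta_k\to 0$ and the injectivity threshold $\delta_k<m/(2Q_\Omega)$. The only difference is your extra factor $\tfrac12$ in $\delta_k$, which is harmless but unnecessary: the theorem already gives the strict bound $\operatorname{diam}(W_{s,\delta_k})<\delta_k\le 1/k$, and taking the closure does not change the diameter.
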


\begin{proof}
Let \(Q_\Omega>0\) be the constant appearing in Theorem \ref{thm:global_smoothing}, and set
\[
\delta_k:=\min\left\{\frac1k,\frac{m}{4Q_\Omega}\right\},
\qquad k\in\mathbb N.
\]
Then \(\delta_k>0\), \(\delta_k\to0\), and
\[
\delta_k<\frac{m}{2Q_\Omega}
\qquad\text{for every }k\in\mathbb N.
\]

Applying Theorem \ref{thm:global_smoothing} with \(\delta=\delta_k\), we obtain, for every
\(k\in\mathbb N\), a map
\[
\widetilde g_k\in C^1(\Omega;\mathbb R^2)
\]
and pairwise disjoint open sets
\[
W_{k,s}\subset\mathbb R^2,
\qquad s\in\mathcal S,
\]
such that
\[
s\in W_{k,s},
\qquad
\operatorname{diam}(W_{k,s})<\delta_k\le \frac1k,
\]
and
\[
\widetilde g_k\in C^\infty\!\left(\Omega\setminus \bigcup_{s\in\mathcal S}W_{k,s}\right).
\]
This proves (1) and (2).

Moreover, by Theorem \ref{thm:global_smoothing},
\[
\|\widetilde g_k-g\|_{W^{2,1}(\Omega)}<\delta_k,
\qquad
\|D\widetilde g_k-Dg\|_{L^\infty(\Omega)}<\delta_k,
\]
hence, since \(\delta_k\to0\),
\[
\widetilde g_k\to g
\qquad\text{in }W^{2,1}(\Omega;\mathbb R^2),
\]
and
\[
D\widetilde g_k\to Dg
\qquad\text{uniformly on }\Omega.
\]
This proves (3) and (4).

Again by Theorem \ref{thm:global_smoothing},
\[
\det D\widetilde g_k\ge \frac{\lambda}{2}
\qquad\text{on }\Omega
\]
for every \(k\in\mathbb N\), proving (5).

Finally, since \(\delta_k<m/(2Q_\Omega)\) for every \(k\), the injectivity conclusion in
Theorem \ref{thm:global_smoothing} applies to each \(\widetilde g_k\). Therefore each
\(\widetilde g_k\) is injective on \(\Omega\), and (6) follows.
\end{proof}

\begin{remark}
We emphasize that the uniform convergence of the gradients
\[
\|D\widetilde g_k-Dg\|_{L^\infty(\Omega)}\to0
\]
stated in (iv) does not follow from abstract Sobolev embeddings, since
\(W^{2,1}(\Omega)\) is not continuously embedded into \(W^{1,\infty}(\Omega)\) in dimension two.
Rather, it is a strong feature of our specific approximation technique: the explicit use of
quadratic polynomials and smooth cut-off functions in Propositions 3.2 and 4.2 directly
provides pointwise uniform control on the first derivatives of the perturbation.
\end{remark}
\section{Approximation theorem in $W^{2,1}$}
\label{sec:approx}

The global smoothing theorem does not by itself produce a 
piecewise quadratic approximation of a general $W^{2,1}$ 
homeomorphism. We isolate the remaining step as a separate 
assumption.

\begin{assumption}
\label{ass:6.1}
Let $\Omega \subset \mathbb R^2$ be a bounded open connected polygonal domain,
and let $h\in C^1(\Omega;\mathbb R^2)$ be injective. Assume that
\[
  \det Dh \geq \mu > 0 \quad \text{on } \Omega,
\]
and there exist finitely many pairwise disjoint open sets
\(U_1,\ldots,U_N\) in \(\Omega\) such that
\[
h\in C^\infty_{\mathrm{loc}}
\left(
\Omega\setminus\bigcup_{i=1}^N \overline{U_i}^{\,\Omega};
\mathbb R^2
\right),
\]
where \(\overline{U_i}^{\,\Omega}\) denotes the closure of \(U_i\) relative to \(\Omega\).

Then, for every $\eta > 0$, there exists a map
$h_\eta \in C^\infty(\Omega;\mathbb{R}^2)$ such that:
\begin{enumerate}
  \item $h_\eta$ is injective on $\Omega$;
  \item $\|h_\eta - h\|_{W^{2,1}(\Omega)} < \eta$;
  \item $\|Dh_\eta - Dh\|_{L^\infty(\Omega)} < \eta$;
  \item $\det Dh_\eta \geq \mu/2$ on $\Omega$.
\end{enumerate}
\end{assumption}

\begin{theorem}
\label{thm:6.2}
Assume Assumption~\ref{ass:5.1} and Assumption~\ref{ass:6.1}.
Then there exists a sequence of maps
\[
  g_k \in C^\infty(\Omega;\mathbb{R}^2), \quad k \in \mathbb{N},
\]
such that:
\begin{enumerate}
  \item each $g_k$ is injective on $\Omega$;
  \item $g_k \to g$ in $W^{2,1}(\Omega;\mathbb{R}^2)$;
  \item $Dg_k \to Dg$ uniformly on $\Omega$;
  \item $\det Dg_k \geq \lambda/4$ on $\Omega$ for every $k \in \mathbb{N}$.
\end{enumerate}
\end{theorem}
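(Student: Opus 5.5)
The plan is a diagonal argument combining Corollary~\ref{cor:global_smoothing_sequence} with Assumption~\ref{ass:6.1}. Corollary~\ref{cor:global_smoothing_sequence} gives maps $\widetilde g_k\in C^1(\Omega;\mathbb R^2)$ and pairwise disjoint open sets $W_{k,s}$, $s\in S$, with the following properties:
\begin{itemize}
\item[] each $\widetilde g_k$ is injective;
\item[] $\widetilde g_k\to g$ in $W^{2,1}$ and $D\widetilde g_k\to Dg$ uniformly;
\item[] $\det D\widetilde g_k\ge\lambda/2$;
\item[] $\widetilde g_k$ is smooth on $\Omega\setminus\bigcup_{s}\overline W_{k,s}$.
\end{itemize}
I will apply Assumption~\ref{ass:6.1} to $h:=\widetilde g_k$, with $\mu:=\lambda/2$ and the exceptional sets $U_s:=W_{k,s}\cap\Omega$ for those $s\in S$ with $W_{k,s}\cap\Omega\neq\varnothing$. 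This family is finite because $S$ is finite, and its members are pairwise disjoint open subsets of $\Omega$. Taking $\eta=1/k$ then yields $g_k:=h_{1/k}\in C^\infty(\Omega;\mathbb R^2)$ with four properties: $g_k$ is injective, $\|g_k-\widetilde g_k\|_{W^{2,1}}<1/k$, $\|Dg_k-D\widetilde g_k\|_{L^\infty}<1/k$, and $\det Dg_k\ge \mu/2=\lambda/4$. The last property is exactly item (4) of Theorem~\ref{thm:6.2}.

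The one step needing care is checking that the regularity hypothesis of Assumption~\ref{ass:6.1} is met. That hypothesis requires smoothness on $\Omega\setminus\bigcup_s\overline{U_s}^{\,\Omega}$, while the corollary gives smoothness on $\Omega\setminus\bigcup_s\overline W_{k,s}$, with closures taken in $\mathbb R^2$. I will show that
\[
\overline{W_{k,s}\cap\Omega}^{\,\Omega}\subset \overline W_{k,s}\cap\Omega ,
\]
so the region where Assumption~\ref{ass:6.1} demands smoothness is contained in the region where smoothness is known. Points $s$ whose sets $W_{k,s}$ miss $\Omega$ simply contribute no exceptional set. This bookkeeping is the main (mild) obstacle, since all the analytic content is carried by the corollary and the assumption.

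Convergence then follows from the triangle inequality:
\[
\|g_k-g\|_{W^{2,1}(\Omega)}\le \tfrac1k+\|\widetilde g_k-g\|_{W^{2,1}(\Omega)}\to0,
\qquad
\|Dg_k-Dg\|_{L^\infty(\Omega)}\le \tfrac1k+\|D\widetilde g_k-Dg\|_{L^\infty(\Omega)}\to0.
\]
These give items (2) and (3). Item (1) is the injectivity provided by Assumption~\ref{ass:6.1}. No use of the bi-Lipschitz bound beyond what the corollary already consumed is needed.
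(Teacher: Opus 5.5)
Your overall strategy is exactly the paper's. You apply Corollary~\ref{cor:global_smoothing_sequence}, feed $h=\widetilde g_k$, $\mu=\lambda/2$, $U_s=W_{k,s}\cap\Omega$, $\eta=1/k$ into Assumption~\ref{ass:6.1}, and finish with the triangle inequality. However, the one step you single out as needing care is argued in the wrong direction. The inclusion you propose to prove, $\overline{W_{k,s}\cap\Omega}^{\,\Omega}\subset\overline W_{k,s}\cap\Omega$, is the trivial one. After taking complements in $\Omega$ it gives
\[
\Omega\setminus\bigcup_s\overline{U_s}^{\,\Omega}\;\supseteq\;\Omega\setminus\bigcup_s\overline W_{k,s},
\]
so the region on which Assumption~\ref{ass:6.1} demands smoothness \emph{contains} the region on which the corollary provides it. That is the opposite of what you need. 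As written, the regularity hypothesis of Assumption~\ref{ass:6.1} is therefore not verified.

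What is required is the reverse inclusion $\overline W_{k,s}\cap\Omega\subset\overline{W_{k,s}\cap\Omega}^{\,\Omega}$; the paper uses the resulting equality $\overline{U_{k,s}}^{\,\Omega}=\overline W_{k,s}\cap\Omega$. It holds because $\Omega$ is open. If $x\in\Omega\cap\overline W_{k,s}$, every sufficiently small ball around $x$ lies in $\Omega$ and meets $W_{k,s}$, hence meets $W_{k,s}\cap\Omega$.

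The same argument is also what justifies discarding the points $s$ with $W_{k,s}\cap\Omega=\varnothing$. For those, $\overline W_{k,s}\cap\Omega=\varnothing$ as well, so they genuinely carry no exceptional set inside $\Omega$. You asserted this without justification; alternatively, you can simply keep the empty sets, which are admissible open sets. With this correction, the rest of your argument coincides with the paper's proof.
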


\begin{proof}
By Corollary~\ref{cor:global_smoothing_sequence}, there exist maps
$\widetilde{g}_k \in C^1(\Omega;\mathbb{R}^2)$, $k \in \mathbb{N}$,
and, for every $k \in \mathbb{N}$ and every $s \in \mathcal S$,
pairwise disjoint open sets $W_{k,s} \subset \mathbb{R}^2$ such that:
\begin{itemize}
\item $
\tilde g_k\in C^\infty_{\mathrm{loc}}
\left(
\Omega\setminus\bigcup_{s\in S}\overline{W}_{k,s};
\mathbb R^2
\right);
$

  \item $\widetilde{g}_k \to g$ in $W^{2,1}(\Omega;\mathbb{R}^2)$;
  \item $D\widetilde{g}_k \to Dg$ uniformly on $\Omega$;
  \item $\det D\widetilde{g}_k \geq \lambda/2$ on $\Omega$
        for every $k \in \mathbb{N}$;
  \item each $\widetilde{g}_k$ is injective on $\Omega$.
\end{itemize}

We wish to apply Assumption 6.1 to each map \(\tilde g_k\).
For this purpose, set
\[
U_{k,s}:=W_{k,s}\cap\Omega,\qquad s\in S.
\]
Then each \(U_{k,s}\) is open in \(\Omega\), and the family
\(\{U_{k,s}\}_{s\in S}\) is pairwise disjoint. Since \(W_{k,s}\) is open in
\(\mathbb R^2\), we have
\[
\overline{U_{k,s}}^{\,\Omega}
=
\overline{W_{k,s}}\cap\Omega .
\]
Therefore
\[
\Omega\setminus\bigcup_{s\in S}\overline{U_{k,s}}^{\,\Omega}
=
\Omega\setminus\bigcup_{s\in S}\overline{W_{k,s}}.
\]
By Corollary 5.6,
\[
\tilde g_k\in C^\infty_{\mathrm{loc}}
\left(
\Omega\setminus\bigcup_{s\in S}\overline{W_{k,s}};
\mathbb R^2
\right),
\]
and hence
\[
\tilde g_k\in C^\infty_{\mathrm{loc}}
\left(
\Omega\setminus\bigcup_{s\in S}\overline{U_{k,s}}^{\,\Omega};
\mathbb R^2
\right).
\]

Thus Assumption 6.1 applies to \(\tilde g_k\) with
\[
h=\tilde g_k,\qquad
\mu=\frac{\lambda}{2},\qquad
\{U_i\}=\{U_{k,s}\}_{s\in S},\qquad
\eta=\frac1k.
\]
We obtain a map \(g_k\in C^\infty(\Omega;\mathbb R^2)\) such that
\[
g_k \text{ is injective on }\Omega,
\]
\[
\|g_k-\tilde g_k\|_{W^{2,1}(\Omega)}<\frac1k,
\]
\[
\|Dg_k-D\tilde g_k\|_{L^\infty(\Omega)}<\frac1k,
\]
and
\[
\det Dg_k\ge \frac{\lambda}{4}\quad\text{on }\Omega.
\]
To prove (2), we use the triangle inequality:
\[
  \|g_k - g\|_{W^{2,1}(\Omega)}
  \leq \|g_k - \widetilde{g}_k\|_{W^{2,1}(\Omega)}
     + \|\widetilde{g}_k - g\|_{W^{2,1}(\Omega)}
  < \frac{1}{k} + \|\widetilde{g}_k - g\|_{W^{2,1}(\Omega)}.
\]
The first term tends to $0$ since it is bounded by $1/k$, and the
second tends to $0$ by Corollary~\ref{cor:global_smoothing_sequence}. Hence
$g_k \to g$ in $W^{2,1}(\Omega;\mathbb{R}^2)$, which proves~(2).

To prove (3), we use the triangle inequality similarly:
\[
  \|Dg_k - Dg\|_{L^\infty(\Omega)}
  \leq \|Dg_k - D\widetilde{g}_k\|_{L^\infty(\Omega)}
     + \|D\widetilde{g}_k - Dg\|_{L^\infty(\Omega)}
  < \frac{1}{k} + \|D\widetilde{g}_k - Dg\|_{L^\infty(\Omega)}.
\]
The first term tends to $0$, and the second tends to $0$ by
Corollary 5.6. Therefore $Dg_k \to Dg$ uniformly
on $\Omega$, which proves~(3) and completes the proof.
\end{proof}
\begin{remark}
\label{rem:6.2}
For maps satisfying Assumption 5.1, the previous results reduce the remaining
analytic obstruction to a localized smoothing problem near the finite set of
endpoints of the interior edges of the partition. In particular, one obtains injective \(C^1\) approximants \(\tilde g_k\) which
are smooth outside arbitrarily small neighborhoods of this set and retain
quantitative lower bounds on both the Jacobian and the metric distortion.
Assumption 6.1 isolates the additional completion step needed to pass from this
localized result to a global smooth approximation theorem.
\end{remark}

\section*{Declarations}
\noindent \textbf{Funding.} The author is a member of the Gruppo Nazionale per l'Analisi Matematica, la Probabilit\`a e le loro Applicazioni (GNAMPA) of the Istituto Nazionale di Alta Matematica (INdAM). CUP E53C25002010001. \par\vspace{0.2cm}
\noindent \textbf{Competing Interests.} The author has no relevant financial or non-financial interests to disclose. \par\vspace{0.2cm}
\noindent \textbf{Data Availability.} Data sharing is not applicable to this article as no datasets were generated or analysed during the current study.

\end{document}